\def\makeautorefname#1#2{\expandafter\def\csname#1autorefname\endcsname{#2}}
\def\equationautorefname~#1\null{(#1)\null}
\newtheorem{thm}{Theorem}[section]
\newtheorem{cor}{Corollary}[section]
\newtheorem{prop}{Proposition}[section]
\newtheorem{lem}{Lemma}[section]
\theoremstyle{definition}
\let\c@obs=\c@thm
\let\c@cor=\c@thm
\let\c@prop=\c@thm
\let\c@lem=\c@thm
\let\c@prob=\c@thm
\let\c@con=\c@thm
\let\c@conj=\c@thm
\let\c@defn=\c@thm
\let\c@notn=\c@thm
\let\c@notns=\c@thm
\let\c@exmp=\c@thm
\let\c@ax=\c@thm
\let\c@pro=\c@thm
\let\c@ass=\c@thm
\let\c@warn=\c@thm
\let\c@rem=\c@thm
\let\c@sch=\c@thm
\numberwithin{equation}{section}
\newcommand{\bbR}{\mathbb{R}}
\newcommand{\vp}{\varphi}
\renewcommand{\phi}{\varphi}
\DeclarePairedDelimiter{\abs}{\lvert}{\rvert}
\newcommand{\del}[1]{\tfrac{\partial}{\partial #1} }
\newcommand{\R}{\mathbb{R}}
\newcommand{\Z}{\mathbb{Z}}
\newcommand{\ti}[1]{\widetilde{#1}}
\newcommand{\p}{\partial}
\newcommand{\reg}{\mathrm{reg}}
\newcommand{\sing}{\mathrm{sing}}
\renewcommand{\phi}{\varphi}
\newenvironment{spm}
    {\left( \begin{smallmatrix}
    }
    { 
     \end{smallmatrix} \right)
    }
\title[Singularities of a Monge--Amp\`ere equation]{Singularities of the solution to a Monge--Amp\`ere equation on the boundary of the 3-simplex}
\author[M. Jonsson]{Mattias Jonsson}
\address{Dept. of Mathematics\\
  University of Michigan\\
  Ann Arbor, MI 48109-1043}
\email{mattiasj@umich.edu}
\author[N. McCleerey]{Nicholas McCleerey}
\address{Dept. of Mathematics\\
  Purdue University\\
  West Lafayette, IN 47907}
\email{nmccleer@purdue.edu}
\author[N. Patram]{Neil Patram}
\address{Dept. of Mathematics\\
  Georgia Institute of Technology\\
  Atlanta, GA 30044}
\email{npatram3@gatech.edu}
\author[B.W. Scott]{Benjamin W. Scott}
\address{Dept. of Mathematics\\
  University of Chicago\\
  Chicago, IL 60637}
\email{scott2002@uchicago.edu}
\begin{document}

\begin{abstract}
We show that the metric defined by the solution to the tropical Monge-Amp\`ere equation, as defined by Hultgren, Mazzon, and the first two authors, on the boundary of the 3-simplex is asymptotic to the Gross-Wilson metric on $S^2$ near each of the 6 singular points. We deduce in addition that the solution is not $C^{1,1}$ across the singular points. Compared to previous works, our starting point is the real Monge-Amp\`ere equation, as opposed to the complex structure.
\end{abstract}

\maketitle


\section{Introduction} 
\label{section: intro}

Motivated by the Strominger-Yau-Zaslow conjecture \cite{SYZ96} in mirror symmetry, in \cite{GW00}, Gross and Wilson study the ``large complex-structure limit" of generic K3 surfaces. We briefly recall their results. A generic K3 is an elliptic fibration over $\mathbb{CP}^1$ with 24 isolated singularities; after a hyperk\"ahler rotation and a renormalization, the large complex-structure limit can be reduced to the limit when the volume of the fibres tends to zero.

When the fibres are sufficiently small, Gross-Wilson glue model metrics (due to Ooguri-Vafa) near the singular fibres onto an explicit semi-flat metric, producing a nearly Ricci-flat metric on the K3. The metric is a good approximation for the genuine Ricci-flat metrics produced by the Calabi-Yau theorem, and so it is shown that the two converge to the same affine metric on $S^2\setminus\{\text{24 points}\}$ in the Gromov-Hausdorff sense; the metric is moreover a ``Monge-Amp\`ere metric" in the sense that its local convex potentials solve a real Monge-Amp\`ere equation.

This motivates the idea of first constructing a Monge-Amp\`ere metric on the base, with some number of isolated singularities, and then using that to study the degenerating complex structure. In dimension 2, the first examples of such metrics were constructed in \cite{GSVY90}; subsequently, Loftin \cite{Lof05} constructed many such metrics on $S^2$ and showed they were asymptotic to the metric of Gross-Wilson near the singular points. One important property that he is unable to show however is the integrality of the induced singular affine structure. 

On the other hand, if one assumes the existence of such a Monge-Amp\`ere metric (in some form), then Yang Li has recently shown in a series of breakthrough works \cite{Li22a, Li20} how to relate it to the degenerating complex metrics, thereby proving a version of the SYZ conjecture on certain degenerating families.

It is thus an important question to study these singular Monge-Amp\`ere metrics in greater detail. This is a highly non-trivial question, see e.g. \cite{Li20, Li22a} for a discussion of the difficulties. In certain cases however, progress can be made -- we focus in particular on the recent paper \cite{HJMM22}, where the problem is solved satisfactorily by assuming a high degree of symmetry on the base. In that paper, by realizing $S^2$ as the boundary of a symmetric $3$-simplex, $\p\Delta$, one obtains a natural singular affine structure; let $\lambda$ be the corresponding Lebesgue measure (coming from pulling back the usual Lebesgue measure in charts). A notion of ``$c$-convexity" for continuous functions on $\p\Delta$ is then defined, and it is shown there exists an (essentially) unique $c$-convex solution to the ``tropical Monge-Amp\`ere equation:"
\begin{equation}\label{asdf}
\nu_\psi = \lambda,
\end{equation}
where $\nu_\psi$ is the ``tropical Monge-Amp\`ere measure," see \cite{HJMM22}. The Hessian metric, $g$, derived from $\psi$ is then shown to be the required Monge-Amp\`ere metric; it has singularities at exactly 6 points, corresponding to the midpoints of the edges of $\p\Delta$.

The metric $g$ can be plugged into the machinery of \cite{Li20} for certain degenerating families of K3's, and so it is natural to relate it to the metrics of Gross-Wilson and Loftin. This is the task we take up in this paper:

\begin{thm}\label{ThrmA}
Suppose that $g$ is the Monge-Amp\`ere metric on $S^2$ minus 6 points constructed in \cite{HJMM22}. Then, near any singular point, there exists a complex coordinate $z$ such that $g(z)$ is asymptotic to the Gross-Wilson metric \cite[Section 3]{Lof05}, i.e.:
\[
g(z) = (-\log|z| + O(1)) |dz|^2.
\]
\end{thm}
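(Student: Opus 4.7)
The plan is to combine the symmetries of $\psi$ inherited from the setup of \cite{HJMM22} with a local comparison against the Gross--Wilson (Ooguri--Vafa) model, which solves the same real Monge--Amp\`ere equation with the same local singular structure.

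\textbf{Local chart and symmetries.} Fix a singular point $p_0\in\partial\Delta$, the midpoint of some edge $e$ of $\Delta$. A punctured neighborhood of $p_0$ is covered by the interiors of the two faces of $\Delta$ meeting at $e$, which unfold to an affine chart in $\R^2$ away from $p_0$. The singular affine structure has focus--focus monodromy at $p_0$: on the universal cover of a punctured neighborhood there are affine coordinates $(x_1,x_2)$ with monodromy a unipotent shear $(x_1,x_2)\mapsto(x_1,x_2+nx_1)$ for some integer $n\ge 1$, in which the tropical Monge--Amp\`ere equation reads $\det(\psi_{ij})=1$. The stabilizer of $p_0$ in the symmetry group $S_4$ of $\Delta$ is the Klein four-group, generated by the reflection swapping the two endpoints of $e$ and the reflection swapping the two faces meeting at $e$. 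By the essential uniqueness of $\psi$, it inherits these symmetries; in an equivariant chart they act as $x_1\mapsto -x_1$ and $x_2\mapsto -x_2$, so $\psi$ is separately even in each variable.

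\textbf{Comparison with $\psi_{GW}$.} The Gross--Wilson potential $\psi_{GW}$ of \cite[Section 3]{Lof05} solves the same local equation with the same monodromy and enjoys the same reflection symmetries. Since $\psi$ is continuous and bounded on a circle $\{r=r_0\}$ around $p_0$, so is $\psi-\psi_{GW}$ there. The comparison principle for $c$-convex solutions of $\nu_{\cdot}=\lambda$ from \cite{HJMM22}, applied to the disc $\{r\le r_0\}$, then yields $\psi-\psi_{GW}=O(1)$ on the punctured neighborhood. Away from $p_0$ both $\psi$ and $\psi_{GW}$ are smooth solutions of the uniformly elliptic real Monge--Amp\`ere equation, so this $C^0$ bound upgrades via interior Monge--Amp\`ere regularity to $C^2$ bounds on compact subsets of the punctured neighborhood.

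\textbf{Metric in the complex coordinate.} The natural complex coordinate near $p_0$ is the Legendre-dual coordinate: on the universal cover $w:=\psi_1+ix_2$ is holomorphic for the complex structure induced by $g$, the monodromy acts by $w\mapsto w+(\text{period})$, and a single-valued coordinate $z$ is obtained as a quotient of $e^{-\alpha w}$ for an appropriate $\alpha>0$. A direct computation for the explicit $\psi_{GW}$ yields $g_{GW}=(-\log|z|+O(1))|dz|^2$, and the $C^2$ closeness of $\psi$ to $\psi_{GW}$ from the previous step transfers this asymptotic to $g$.

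The main technical obstacle is the comparison step: correctly handling the interaction of the focus--focus monodromy at $p_0$ with the $c$-convexity condition, so that the tropical maximum principle of \cite{HJMM22} really does allow $O(1)$ boundary data on $\{r=r_0\}$ to control $\psi-\psi_{GW}$ inside the punctured disc. The reflection symmetries of $\psi$ look essential for ruling out unbounded harmonic-type perturbations that would otherwise spoil the logarithmic leading behavior.
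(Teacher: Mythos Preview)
Your proposal has a genuine gap at the $C^2$-upgrade step, and it is precisely the step where all the content of the theorem lives.

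Grant the $C^0$ comparison $\psi-\psi_{GW}=O(1)$ on the punctured disc (this is already not free: you would need to check that $\psi_{GW}$ is a legitimate competitor in the $c$-convex framework of \cite{HJMM22}, not just a local model, and you correctly flag this as unresolved). The theorem, however, is a statement about the metric $g=\nabla d\psi$, so you need to control $D^2\psi-D^2\psi_{GW}$ \emph{uniformly as you approach $p_0$}, where both Hessians blow up like $-\log|z|$. Interior Monge--Amp\`ere regularity (Caffarelli, Evans--Krylov, Schauder) only gives $C^2$ bounds on compact subsets of the punctured disc, and those bounds degenerate as the subset approaches $p_0$; your own phrasing ``on compact subsets of the punctured neighborhood'' concedes this. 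Nothing in the argument shows that the two Hessians blow up at the same rate with a bounded difference, which is exactly the assertion $g=(-\log|z|+O(1))|dz|^2$. There is a second, related problem: your complex coordinate $z$ is built from $\psi$ (via $w=\psi_1+ix_2$), while the known asymptotic for $g_{GW}$ is in the coordinate built from $\psi_{GW}$; comparing the two coordinates again requires $C^1$ or $C^2$ closeness near $p_0$, which you do not have.

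The paper takes a completely different route that avoids any comparison with a model solution. Working in a planar chart, it writes down explicit isothermal coordinates $u=x-y$, $v=\vp_x+\vp_y$ built from the actual potential $\vp$, and shows directly (using the Monge--Amp\`ere equation) that $g=\rho^{-1}(du^2+dv^2)$ with $\rho=\vp_{xx}+2\vp_{xy}+\vp_{yy}$. The $\Z_2\times\Z_2$ symmetries you identify are used, not for a comparison argument, but to show that $u$, $v$, and $\rho$ extend continuously across the slit where $\vp$ fails to be smooth. The key structural step is then to recognise $(g,\nabla)$ as a special K\"ahler structure for which $z=u+iv$ is a \emph{special} holomorphic coordinate; the formalism of \cite{CH19,Hay15} then forces the conformal factor $\rho^{-1}$ to be harmonic on the punctured disc, hence $\rho^{-1}=-C\log|z|+h(z)$ with $h$ smooth. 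Finally $C\ne 0$ is ruled out by a global topological argument: if $C=0$ at every singular point the flat connection would extend over all of $S^2$, which is impossible. No model metric and no comparison principle enter at any stage.
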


The main difficulty in showing Theorem \ref{ThrmA} is in producing the complex coordinate; \cite{GW00, Lof05} work exclusively in complex coordinates, which is convenient for describing the asymptotics of the metric, but provides very little explicit information about the affine structure and convex potential. Meanwhile, \cite{HJMM22} starts by fixing the affine structure (so that it is automatically integral, for instance); this comes at the cost of locking the complex structure behind a non-explicit coordinate change.

We show that, using a local convex potential $\vp$ for $g$, it is possible to compute the requisite coordinate transformation in terms of the derivatives of $\vp$. A technical difficulty is that $\vp$ cannot be chosen to be smooth in a neighborhood of the singular point; this is a natural phenomenon, but makes it necessary to chose the coordinate carefully, so that it is continuous. This is accomplished by using symmetries. We then show that the identity function is a ``special" holomorphic coordinate for the special K\"ahler structure induced by $g$. This utilizes computations from \cite{CH19, Hay15}, and ultimately shows the conformal factor is harmonic, from whence Theorem \ref{ThrmA} follows.

In fact, our method seems to work more generally for Monge-Amp\`ere metrics coming from optimal transport problems where there is a joint $\Z_2\times \Z_2$ symmetry on the domain and range. While we do not pursue this further, it seems an interesting problem to look for more general transport problems where similar results hold; such problems may be relevant to the metrics contructed in \cite{AH23, Li23}, for instance.

Finally, the asymptotics we deduce using our conformal transformation can be used to show that the potential of the metric in \cite{HJMM22} is not $C^{1,1}$ across the singular points:

\begin{thm}\label{ThrmB}
Let $\psi$ be the $c$-convex solution to \eqref{asdf} on $\p\Delta$. Then $\psi$ is not $C^{1,1}$ across any of the 6 singular points.
\end{thm}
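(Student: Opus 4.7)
The plan is to use the special Kähler identification from the proof of \autoref{ThrmA} together with the asymptotic $g = (-\log|z| + O(1))|dz|^2$ to show that $\p_z \p_{\bar z}\psi$ equals $\tfrac{1}{2}(-\log|z| + O(1))$ near a singular point $p$. This forces $\p_z\p_{\bar z}\psi = \tfrac{1}{4}\Delta\psi$ to be unbounded, which is incompatible with $C^{1,1}$ regularity (since the second distributional derivatives of a $C^{1,1}$ function lie in $L^\infty$).

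Concretely, let $z$ be the smooth complex coordinate near $p$ from \autoref{ThrmA}, and let $(u^1, u^2)$ be affine coordinates on a punctured neighborhood of $p$. In affine coordinates, $g = \psi_{ij}\,du^i\,du^j$ with $\det(\psi_{ij}) = 1$ by the tropical Monge-Amp\`ere equation \eqref{asdf}. Viewing $\psi$ as a function of $(z,\bar z)$ via $\psi(z,\bar z) = \psi(u(z,\bar z))$ and applying the chain rule yields
\[
\p_z \p_{\bar z} \psi \;=\; \psi_{ij}(\p_z u^i)(\p_{\bar z} u^j) \;+\; (\p_i \psi)(\p_z \p_{\bar z} u^i).
\]
For the first term, writing $\p_z = \tfrac{1}{2}(\p_x - i\p_y)$ and $\p_{\bar z} = \tfrac{1}{2}(\p_x + i\p_y)$ and using that the antisymmetric (in $i,j$) contribution vanishes against the symmetric $\psi_{ij}$, one obtains $\psi_{ij}(\p_z u^i)(\p_{\bar z}u^j) = \tfrac{1}{4}\bigl(g^z_{xx} + g^z_{yy}\bigr) = \tfrac{1}{2}e^{2\phi} = \tfrac{1}{2}(-\log|z| + O(1))$, using the conformal form $g = e^{2\phi}|dz|^2$. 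The second term equals $\tfrac{1}{4}(\p_i\psi)\Delta u^i$ and vanishes because the affine coordinates $u^i$ are harmonic in $z$: this follows from the special K\"ahler identification used in the proof of \autoref{ThrmA} (following \cite{CH19, Hay15}), in which $z$ is a ``special'' holomorphic coordinate and the affine coordinates on the base are identified with the real parts of a special holomorphic coordinate and its conjugate dual coordinate (both holomorphic in $z$). Combining these, $\p_z\p_{\bar z}\psi = \tfrac{1}{2}(-\log|z| + O(1))$, which is not essentially bounded as $z \to 0$; this contradicts $C^{1,1}$ regularity.

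The main obstacle is to verify that the affine coordinates are indeed harmonic in $z$. If this is recorded (or easily extracted) from the proof of \autoref{ThrmA} -- as one would expect given that the construction there both produces $z$ from the derivatives of $\psi$ and shows that the conformal factor $e^{2\phi}$ is harmonic -- then the above lines conclude the proof. Otherwise, one would need a slightly more careful bookkeeping of the singular change of coordinates $u \leftrightarrow z$, but the conclusion that $\p_z\p_{\bar z}\psi$ inherits the logarithmic blow-up of $e^{2\phi}$ should be robust, since any correction term introduced by non-harmonicity of $u^i$ is subleading compared to the $-\log|z|$ growth.
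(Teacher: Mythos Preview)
There is a genuine gap. Your argument shows that $\partial_z\partial_{\bar z}\psi = \tfrac12\rho^{-1}$ blows up in the isothermal coordinate $z=(u,v)$, and concludes that $\psi$ cannot be $C^{1,1}$. But ``$C^{1,1}$ across a singular point'' in this paper means $C^{1,1}$ in the affine chart $q_{0,1}^{-1}$, i.e.\ in the $(x,y)$-coordinates in which the local potential $\vp$ and the Monge--Amp\`ere equation are written; this is also the sense in which the cited $C^{1,\alpha}$ regularity from \cite{SY20} holds. The coordinate $z$ is \emph{not} smoothly (or even bi-Lipschitz) compatible with $(x,y)$ at the singular point: the map $f(x,y)=(u,v)=(x-y,\vp_x+\vp_y)$ has Jacobian $\rho=\vp_{xx}+2\vp_{xy}+\vp_{yy}$, and the very asymptotic you invoke says $\rho\to 0$ there, so the inverse change has derivatives $\sim\rho^{-1}$. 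Hence the identity
\[
\Delta_{(u,v)}\vp \;=\; \vp_{ij}\,(\partial_a x^i)(\partial_a x^j)\;+\;\vp_i\,\Delta_{(u,v)} x^i \;=\; 2\rho^{-1}
\]
is entirely consistent with bounded $\vp_{ij}$ and unbounded $\partial_a x^i$; it gives no information about the size of the Hessian in $(x,y)$-coordinates. In short, blow-up of $\Delta_z\psi$ does not contradict $C^{1,1}$ regularity in the affine chart, because you are measuring second derivatives in a coordinate system that itself degenerates.

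The paper's proof avoids this by staying in $(x,y)$-coordinates throughout. From $\rho=\vp_{xx}+2\vp_{xy}+\vp_{yy}$ one eliminates $\vp_{xy}$ and substitutes into the Monge--Amp\`ere equation $\vp_{xx}\vp_{yy}-\vp_{xy}^2=1$ to obtain
\[
1 \;=\; -\tfrac14(\vp_{xx}-\vp_{yy})^2 \;+\; \tfrac{\rho}{2}(\vp_{xx}+\vp_{yy}) \;-\; \tfrac{\rho^2}{4},
\]
whence $\vp_{xx}+\vp_{yy}\ge 2\rho^{-1}$. Since \autoref{ThrmA} (via \autoref{log type}) gives $\rho^{-1}\to\infty$ at the origin, the affine Hessian itself blows up, which is exactly the statement needed. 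Your harmonicity observation for the affine coordinates (equivalently, that $x-i\vp_y$ and $y+i\vp_x$ are holomorphic in $z$ on the slit domain) is correct and pleasant, but the conclusion you draw from it requires the extra algebraic step above; the chain-rule identity alone is not enough.
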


By the main result of \cite{SY20}, $\psi$ is $C^{1,\alpha}$ for any $0 < \alpha < 1$, so Theorem \ref{ThrmB} is optimal.

We conclude the introduction with a brief outline of the rest of the paper. In Section 2, we recall the setup of \cite{HJMM22}, and use it to derive a local optimal transport problem, whose solution can be used to recover the $c$-convex function $\psi$. In Section 3, we investigate the symmetries of the optimal transport problem, and use them to show that certain functions, constructed from the derivatives of the convex potential $\vp$, are continuous. Finally, in Section 4, we construct our isothermal coordinates, and prove Theorems \ref{ThrmA} and \ref{ThrmB}.

\subsection{Acknowledgements} We would like to thank Jakob Hultgren for interesting comments. This work came out of an REU project at the University of Michigan in the summer of 2023, funded by grants DMS-1900025 and DMS-2154380 from the NSF.

\section{The Tropical Monge-Amp\`ere Equation in Dimension 2} \label{section: Setup}

In this section, we recall the geometric setup in \cite{HJMM22} and reduce the (symmetric) tropical Monge-Amp\`ere equation to a planar optimal transport problem. This section is mainly motivational; excepting Corollary \ref{log type} and Theorems \ref{ThrmA} and \ref{ThrmB}, the rest of the paper is independent of the results mentioned here.

\subsection{The Tropical Monge-Amp\`ere Measure}

In \cite{HJMM22}, a geometric version of the Legendre transform, called the $c$-transform, was defined and studied for functions defined on the boundary of a simplex in $\R^{d+1}$. This $c$-transform then naturally gave rise to a Monge-Amp\`ere type operator, which was shown to agree with the Alexandrov Monge-Amp\`ere operator in charts (in a suitable sense). 

In this section, we shall recall the exact setup in \cite{HJMM22} when the dimension is $2$. Our simplex, $\Delta$, is defined to be the hull of the points:
\begin{align*}
    m_0 &= (1,1,1) & m_1 &= (-3,1,1) \\
    m_2 &= (1,-3,1) & m_3 &= (1,1,-3)
\end{align*}
inside $\R^3$. We write $A := \p \Delta$. We also consider the dual simplex, $\Delta^\vee \subseteq (\bbR^3)^*$:
\[
\Delta^\vee = \left\{n\in(\bbR^3)^*\ |\  \sup_{m\in\Delta}\langle m, n\rangle = \max_{0\leq i \leq 3}\langle m_i, n \rangle \leq 1\right\}.
\]

\begin{center}
\begin{figure}
\begin{tikzpicture}[scale=0.6]
\coordinate (0) at (2,-2);
\coordinate (1) at (3,2);
\coordinate (2) at (0,4);
\coordinate (3) at (-3,0);

\node[right] at (1) {$n_2$};
\node[below] at (0) {$n_1$};
\node[above] at (2) {$n_0$};
\node[left] at (3) {$n_3$};

\coordinate (01) at (2.5,0);
\coordinate (02) at (1,1);
\coordinate (03) at (-0.5,-1);
\coordinate (12) at (1.5,3);
\coordinate (23) at (-1.5,2);

\coordinate (012) at (5/3,4/3);
\coordinate (023) at (-1/3,2/3);
\coordinate (123) at (0,2);

\filldraw (0) circle (1pt);
\filldraw (1) circle (1pt);
\filldraw (2) circle (1pt);
\filldraw (3) circle (1pt);
\filldraw (01) circle (2pt);
\filldraw (02) circle (2pt);
\filldraw (03) circle (2pt);
\filldraw (12) circle (2pt);
\filldraw (23) circle (2pt);

\fill[blue, fill opacity=0.2] (2) -- (23) -- (023)--(02)--(012)--(12);
\draw (23)--(023)--(02)--(012)--(12);
\draw[dashed] (12)--(123)--(23);
\draw (0)--(1)--(2)--(3)--(0)--(2);
\draw[dashed] (1)--(3);
\end{tikzpicture}
\caption{The region $T_0\subset B$}
\label{fig:T_0}
\end{figure}
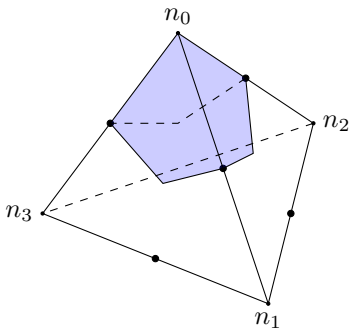
\end{center}

We write $B := \p\Delta^\vee$. One may check that $\Delta^\vee$ is the convex hull of the following points:
\begin{align*}
    n_0 &= (-1,-1,-1) & n_1 &= (1,0,0) \\
    n_2 &= (0,1,0) & n_3 &= (0,0,1)
\end{align*}

Write $m_{ij} := \frac{m_i + m_j}{2}$ and $n_{ij} := \frac{n_i + n_j}{2}$, $i \not= j$, for the midpoints of the edges of $A$ and $B$. Set:
\[
A_{\sing} := \cup_{i\not= j} \{m_{ij}\},\quad B_{\sing} := \cup_{i\not= j} \{n_{ij}\}, \quad A_{\reg} = A\setminus A_{\sing},\text{ and }B_{\reg} := B\setminus B_{\sing}.
\]
We can now define flat atlases on $A_{\reg}$ and $B_{\reg}$, which turn $A$ and $B$ into singular integral affine manifolds. These atlases have two different types of charts; the first are the open faces of $A$ and $B$. We shall write $\sigma_i\subset A$ for the (closed) face of $A$ which does not include $m_i$ as a vertex, i.e. $\sigma_i := \mathrm{Hull}\{m_j \ |\  j\neq i\}$. We define $\tau_i\subset B$ similarly. We may use any volume preserving linear projection onto an open subset of $\R^2$ as a coordinate function on $\sigma_i^\circ$ (similarly for $\tau_i^\circ$).

The other charts are centered on the vertices of the simplex. Define
\[
S_i = \{m\in A: \langle m, n_i \rangle = \min_j \langle m, n_j \rangle\}\subset A;
\]
$T_i \subset B$ is defined similarly. Fix $i, j\in \{0, 1,2,3\}$, $j\not= i$, and let $k < \ell$ be the remaining two indices. Then we declare:
\[
p_{i,j}^{-1}(m) := (\langle m, n_j - n_k \rangle, \langle m, n_j - n_\ell \rangle)
\]
to be a coordinate function on $S^\circ_i$ (note however that $p_{i,j}^{-1}$ is defined on the larger set $\mathrm{Star}(m_i) := A\setminus \sigma_i$). If we write $p_{i,j}$ for the inverse of $p_{i,j}^{-1}$, then one can check that $p_{i,j}$ is a piecewise affine map on its domain, which is affine when restricted to $p_{i,j}^{-1}(\tau_k)$ for any $k\not= i$. It follows that the transition functions of this atlas are affine, making $A_{\reg}$ into an affine manifold \cite{KS06, HJMM22}.

On $T_i^\circ$, we similarly define coordinate functions by:
\[
q_{i,j}^{-1}(n) := \frac{1}{4}(\langle m_k - m_j, n \rangle, \langle m_\ell - m_j, n \rangle),
\]
where the factor of $-1/4$ is to preserve the volume and compatibility with the coordinate pairing between $A$ and $B$. Again, this turns $B_{\reg}$ into an integral affine manifold.

The integral affine structure furnishes $A_{\reg}$ with a natural measure, defined by pulling back the Lebesgue measure on $\R^2$. We write $\mu$ for the zero extension of this measures to $A$, and refer to it as the Lebesgue measure on $A$. We write $\nu$ for the Lebesgue measure on $B$.\\

We now recall the $c$-transform. Denote by $L^\infty(A)$ and $L^\infty(B)$ the spaces of bounded real valued functions on $A$ and $B$, respectively. Given $\vp\in L^\infty(A), \psi\in L^\infty(B)$, we may define their $c$-transforms as follows:
\[
L^\infty(B) \ni \vp^c := \sup_{m\in A}\langle m,n \rangle - \vp(m)
\]
\[
L^\infty(A) \ni \psi^c := \sup_{n\in B}\langle m,n \rangle - \psi(n)
\]
Note that $\vp^c, \psi^c$ are bounded. 
We say that $\vp\in L^\infty(A)$ is {\bf $c$-convex} if $\vp^{cc} = \vp$, and similarly for functions $\psi\in L^\infty(B)$.

Given a $c$-convex function $\psi\in L^\infty(B)$, we define its $c$-subgradient as the multivalued map $\p ^c\psi : B \to A$ given by
\[
(\p^c \psi)(n) := \{m\in A\ |\ \psi(n) + \psi^c(m) = \langle m, n \rangle\}.
\]
As in the case of the usual subgradient of a convex function, one may check, e.g. \cite[Proposition 4.10]{HJMM22}, that the pushforward $(\p^c\psi^c)_*\mu$ is a well-defined measure. We shall refer to this measure as the tropical Monge-Amp\`ere measure of $\psi$, and denote it by $\nu_\psi$.

%

\subsection{Reduction to Planar Optimal Transport}

In \cite{HJMM22}, it was shown that there exists a $c$-convex function $\psi$ on $B$, unique up to an additive constant, solving the tropical Monge-Amp\`ere equation:
\begin{equation}\label{cMA}
\nu_\psi = \nu.
\end{equation}
The function $\psi$ is symmetric, in the following sense: any permutation of the vertices of $B$ induces a natural affine self-map of $B$, which preserves the measure $\nu$. By uniqueness, one finds that $\psi$ is also preserved under pull-back by these maps.

Write $m_j$ for the function $n \mapsto \langle m_j, n\rangle$ on $B$. Given $i\not= j$, define the function:
\[
\psi_{i,j} := (\psi - m_j)\circ q_{i,j} \text{ on }q_{i,j}^{-1}(\mathrm{Star}(n_i)).
\]
It was shown in \cite[Proposition 3.26]{Li22a}, \cite[Lemma 3.13]{HJMM22} that $\psi_{i,j}$ is convex on $q_{i,j}^{-1}(\mathrm{Star}(n_i))$; moreover, when we restrict to the image of an (appropriate) coordinate chart, we have an explicit bijection between the $c$-subgradients of $\psi$ and the subgradients of $\psi_{i,j}$:
\begin{lem}\label{lem:stuff}\cite[Lemma~4.4]{HJMM22}
Suppose that $\psi$ is a bounded and symmetric real-valued $c$-convex function on $B$. If $i\ne j$, then $p_{j,i}^{-1}$ gives a bijection of $(\p^c\psi)(n)$ onto $\p\psi_{i,j}(q_{i,j}^{-1}(n))$ for any $n\in T_i^{\circ}$. The same result holds for any $n\in \tau_j^{\circ}$.
\end{lem}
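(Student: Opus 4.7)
The strategy is to translate the $c$-subgradient condition into the ordinary subgradient condition in the affine chart $q_{i,j}$. By definition of the $c$-transform, $m\in (\partial^c\psi)(n)$ is equivalent to
\[
\psi(n')-\psi(n)\geq \langle m, n'-n\rangle\quad\text{for all } n'\in B.
\]
On $\mathrm{Star}(n_i)$ the identity $\psi = \psi_{i,j}\circ q_{i,j}^{-1}+\langle m_j,\cdot\rangle$ holds by definition of $\psi_{i,j}$, so restricting the test points $n'$ to $\mathrm{Star}(n_i)$ turns the $c$-subgradient inequality into
\[
\psi_{i,j}(q_{i,j}^{-1}(n'))-\psi_{i,j}(q_{i,j}^{-1}(n))\geq \langle m-m_j,\,n'-n\rangle.
\]

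The first technical step is a direct linear-algebra verification: on the tangent plane to each face of $\Delta^\vee$ meeting $\mathrm{Star}(n_i)$, the functional $\langle m-m_j,\cdot\rangle$ coincides with $p_{j,i}^{-1}(m)\cdot q_{i,j}^{-1}(\cdot)$. Concretely, the basis $\{n_i-n_k,n_i-n_\ell\}$ used in $p_{j,i}^{-1}$ is dual to the basis $\tfrac14\{m_k-m_j,m_\ell-m_j\}$ used in $q_{i,j}^{-1}$ when restricted to the appropriate face, and the normalizations are chosen precisely to make this pairing come out correctly. With this duality in hand, the rewritten inequality becomes exactly the classical subgradient condition
\[
\psi_{i,j}(y)-\psi_{i,j}(x)\geq p\cdot (y-x)
\]
with $x=q_{i,j}^{-1}(n)$, $y=q_{i,j}^{-1}(n')$, $p=p_{j,i}^{-1}(m)$. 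Injectivity of the map $m\mapsto p_{j,i}^{-1}(m)$ on $(\partial^c\psi)(n)\cap \mathrm{Star}(m_j)$ is automatic from injectivity of the chart $p_{j,i}^{-1}$ on each face, and surjectivity follows by running the calculation in reverse: given $p\in \partial \psi_{i,j}(q_{i,j}^{-1}(n))$, set $m:=p_{j,i}(p)$ and check the $c$-subgradient inequality, extending from $n'\in \mathrm{Star}(n_i)$ to all $n'\in B$ by convexity of $\psi_{i,j}$ and the $c$-convexity of $\psi$.

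The main obstacle is showing that $p_{j,i}^{-1}$ is defined on all of $(\partial^c\psi)(n)$, i.e.\ that $(\partial^c\psi)(n)\cap \sigma_j=\emptyset$, so the bijection above is really defined on the whole $c$-subgradient. This is where the symmetry hypothesis on $\psi$ enters. Using the Legendre duality $m\in(\partial^c\psi)(n)\iff n\in(\partial^c\psi^c)(m)$ together with the equivariance of $(\partial^c\psi^c)$ under the $S_4$-action by vertex permutations (inherited from the invariance of $\psi$, hence of $\psi^c$), one argues as follows: if a point $m$ in the relative interior of $\sigma_j$ lay in $(\partial^c\psi)(n)$, then averaging over the stabilizer of $m$ in $S_4$ would force $(\partial^c\psi^c)(m)$ to contain a whole $S_4$-orbit of points including $n$; for $n\in T_i^\circ$ (respectively $n\in \tau_j^\circ$) the relevant orbit has enough points spread across distinct cells that the resulting set cannot lie in a single locus where $\psi^c$ is affine, contradicting that $(\partial^c\psi^c)(m)$ is connected/convex where $\psi^c$ is locally affine near an interior point of $\sigma_j$. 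The careful case analysis separating $n\in T_i^\circ$ from $n\in \tau_j^\circ$, and handling the boundary between them, is the most delicate part of the argument.
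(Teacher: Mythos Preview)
The paper does not prove this lemma; it is quoted verbatim from \cite[Lemma~4.4]{HJMM22} and used as a black box, so there is no in-paper proof to compare your proposal against.

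Your overall strategy---rewrite the $c$-subgradient inequality in the chart $q_{i,j}^{-1}$ and match it with the ordinary subgradient condition via the duality between the $p_{j,i}$ and $q_{i,j}$ coordinates---is the natural one and is presumably what \cite{HJMM22} does. However, your treatment of the ``main obstacle'' has a genuine gap. You propose to exclude $m\in\sigma_j\cap(\partial^c\psi)(n)$ by using the stabilizer of $m$ in $S_4$ to generate an orbit of $n$ inside $(\partial^c\psi^c)(m)$ and then derive a contradiction from connectedness or affineness. But a generic point $m$ in the relative interior of $\sigma_j$ has trivial stabilizer in $S_4$, so the orbit is just $\{n\}$ and nothing is gained. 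Even for special $m$ with nontrivial stabilizer, the asserted contradiction is not justified: $(\partial^c\psi^c)(m)$ lives in the nonconvex set $B$, and there is no reason it should be contained in a locus where $\psi^c$ is affine. The mechanism actually used in \cite{HJMM22}, visible from how \cite[Lemma~4.1]{HJMM22} is invoked later in the present paper, is different: one shows directly, by maximizing the bilinear pairing over symmetric subregions, that for symmetric $c$-convex $\psi$ one has containments of the type $(\partial^c\psi)(T_i^\circ)\subset S_i$, which already forces $(\partial^c\psi)(n)\subset\mathrm{Star}(m_j)$ without any stabilizer argument.

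Your surjectivity step has a related soft spot: the subgradient inequality you obtain is only for test points $n'\in\mathrm{Star}(n_i)$, and the phrase ``extend to all $n'\in B$ by $c$-convexity of $\psi$'' does not name an actual argument. What is needed is again a geometric input (of the \cite[Lemma~4.1]{HJMM22} type) guaranteeing that for $m\in\mathrm{Star}(m_j)$ the supremum defining $\psi^c(m)$ is attained on $\mathrm{Star}(n_i)$.
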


The goal of this subsection is to use Lemma \ref{lem:stuff} and the symmetries of $\psi$ to find a planar optimal transportation problem which can recover the solution to \eqref{cMA} on a neighborhood of a point $n_{ij}\in B_{\sing}$. 

From now on, we fix $\psi$ to be a solution to \eqref{cMA}. Write $m_{ijk} = \frac{m_i + m_j + m_k}{3}$ and $n_{ijk} = \frac{n_i + n_j + n_k}{3}$, $i\not=j\not=k\not=i$, for the barycenters of the open faces of $A$ and $B$. 

We define a neighborhood $Q$ of the singular point $n_{01}$ by:
\[
Q := B\cap\text{Hull}\{n_0, n_{013}, n_1, n_{012}\};
\]
using the proof of \cite[Lemma~4.1]{HJMM22}, one can check that $\p^c\psi$ maps $Q$ onto the region:
\[
P := A\cap\text{Hull}\{m_2, m_{023}, m_3, m_{123}\}.
\]
Briefly, to see this, subdivide $P$ and $Q$ into four triangular subregions; see Figure \ref{fig:kite regions}. Then, check that the $c$-pairing is maximized for points between corresponding subregions, by mimicking the proof of \cite[Lemma~2.1]{HJMM22}. Thus, for symmetric $c$-convex functions, the $c$-subgradient must map each subregion to its maximizing pair.
\begin{center}
\begin{figure}
\begin{tikzpicture}[scale=0.6]

\coordinate (d0) at (10,-2);
\coordinate (d1) at (11,2);
\coordinate (d2) at (8,4);
\coordinate (d3) at (5,0);

\node[right] at (d1) {$m_2$};
\node[below] at (d0) {$m_1$};
\node[above] at (d2) {$m_0$};
\node[left] at (d3) {$m_3$};

\coordinate (d01) at (10.5,0);
\coordinate (d02) at (9,1);
\coordinate (d03) at (7.5,-1);
\coordinate (d12) at (9.5,3);
\coordinate (d23) at (6.5,2);
\coordinate (d13) at (8,1);

\coordinate (d012) at (29/3,4/3);
\coordinate (d023) at (23/3,2/3);
\coordinate (d123) at (8,2);
\coordinate (d013) at (26/3,0);

\filldraw (d0) circle (1pt);
\filldraw (d1) circle (1pt);
\filldraw (d2) circle (1pt);
\filldraw (d3) circle (1pt);
\filldraw (d01) circle (2pt);
\filldraw (d02) circle (2pt);
\filldraw (d03) circle (2pt);
\filldraw (d12) circle (2pt);
\filldraw (d23) circle (2pt);

\fill[yellow, fill opacity=0.2] (d013)--(d1)--(d123)--(d3);
\draw[dashed] (d013)--(d1)--(d123)--(d3)--(d013);
\draw[dashed] (d013)--(d13)--(d123);
\draw (d0)--(d1)--(d2)--(d3)--(d0)--(d2);
\draw[dashed] (d1)--(d3);

\coordinate (0) at (2,-2);
\coordinate (1) at (3,2);
\coordinate (2) at (0,4);
\coordinate (3) at (-3,0);

\node[right] at (1) {$n_2$};
\node[below] at (0) {$n_1$};
\node[above] at (2) {$n_0$};
\node[left] at (3) {$n_3$};

\coordinate (01) at (2.5,0);
\coordinate (02) at (1,1);
\coordinate (03) at (-0.5,-1);
\coordinate (12) at (1.5,3);
\coordinate (23) at (-1.5,2);

\coordinate (012) at (5/3,4/3);
\coordinate (023) at (-1/3,2/3);
\coordinate (123) at (0,2);

\filldraw (0) circle (1pt);
\filldraw (1) circle (1pt);
\filldraw (2) circle (1pt);
\filldraw (3) circle (1pt);
\filldraw (01) circle (2pt);
\filldraw (02) circle (2pt);
\filldraw (03) circle (2pt);
\filldraw (12) circle (2pt);
\filldraw (23) circle (2pt);

\fill[blue, fill opacity=0.2] (2)--(023)--(0)--(012);
\draw (2)--(023)--(0)--(012)--(2);

\draw (023)--(02)--(012); 
\draw (0)--(1)--(2)--(3)--(0)--(2);
\draw[dashed] (1)--(3);
\end{tikzpicture}
\caption{The regions $Q\subset B$ and $P\subset A$ and their four subregions.}
\label{fig:kite regions}
\end{figure}
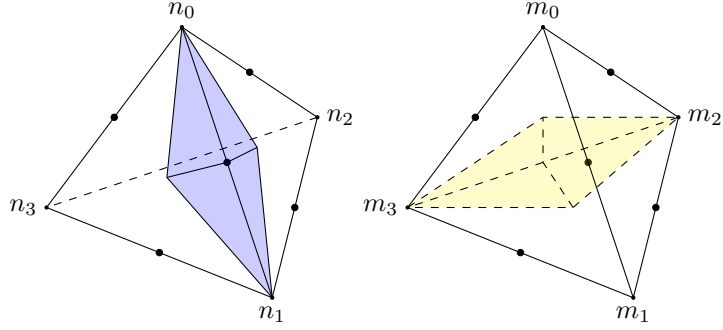
\end{center}

Now, a brief computation shows that the region $Q$ is mapped by $q_{0,1}^{-1}$ onto the kite-like region:
\[
\Omega := \mathrm{Hull}\{ (0,0), (\tfrac{1}{3}, 0), (1,1), (0, \tfrac{1}{3})\}  \subset \R^2;
\]
we have $q_{0,1}^{-1}(n_{01}) = (\tfrac{1}{2},\tfrac{1}{2})$. We now identify the image of the slit kite $\Omega \setminus \{y = x \geq \tfrac{1}{2}\}$ under $\p\psi_{0,1}$, the subgradient map of $\psi_{0,1}$. We will do this by dividing the region $Q$ into three components and using \cite[Lemma 4.1]{HJMM22} to determine the image of these regions under $\p^c\psi$. Then, we use Lemma \ref{lem:stuff} to relate this to $\p \psi_{0,1}$; this will require us to change coordinates to deal with the parts of $Q$ which lie outside of $T_0$.

We start by noting that $\p^c\psi(n_{01}) = \{m_{23}\}$. Now consider the region $B\cap \mathrm{Hull}\{n_0, n_{013}, n_{01}, n_{012}\}\subset T_0\cap Q$; the image of this under $q_{0,1}^{-1}$ is given by
\[
\mathrm{Hull}\left\{(0,0), \left(\tfrac{1}{3},0\right), \left(\tfrac{1}{2},\tfrac{1}{2}\right), \left(0,\tfrac{1}{3}\right)\right\}.
\]
By Lemma \ref{lem:stuff}, the image of this region under $\p\psi_{0,1}$ is the triangular region $\mathrm{Hull}\{(\tfrac{4}{3},\tfrac{4}{3}), (4,0), (0,4)\}$; note that the singular point $(\tfrac{1}{2}, \tfrac{1}{2})$ is sent to the point $(2,2)$.

We now consider the triangular region $B\cap \mathrm{Hull}\{n_{013},n_1,n_{01}\} \subset Q\cap \tau_3$; the image of this region under $q_{0,1}^{-1}$ is the triangle $K := \mathrm{Hull}\{(\tfrac{1}{3}, 0), (1,1) (\tfrac{1}{2},\tfrac{1}{2})\}$.


In this case, Lemma \ref{lem:stuff} only applies to $\psi_{0,2}$; hence, we need to compute the relation between $\psi_{0,1}$ and $\psi_{0,2}$. Recall that:
\begin{align*}
\psi_{0,1} = (\psi - m_1)\circ q_{0,1} && \psi_{0,2} = (\psi - m_2)\circ q_{0,2}
\end{align*}
Thus:
\[
\psi_{0,1} \circ q_{0,1}^{-1}(n) + \langle m_1, n \rangle = \psi_{0,2} \circ q_{0,2}^{-1}(n) + \langle m_2, n \rangle
\]
\[
\psi_{0,1} \circ q_{0,1}^{-1}(n) = \psi_{0,2} \circ q_{0,2}^{-1}(n) + \langle m_2 - m_1, n \rangle.
\]
Writing $n^\prime := q_{01}^{-1}(n)$, we now have the formula:
\[
\psi_{0,1}(n^\prime) = \psi_{0,2} \circ q_{0,2}^{-1} \circ q_{0,1}(n^\prime) + \langle m_2 - m_1, q_{0,1}(n^\prime) \rangle
\]
A brief computation shows that
\[
q_{02}^{-1} \circ q_{01} = \begin{bmatrix}-1 & 0 \\ -1 & 1\end{bmatrix}
\]
on the region $K$. Moreover, we have:
\[
\langle m_1 - m_2, q_{01}(n^\prime) \rangle = \begin{bmatrix}1&0 \end{bmatrix}n^\prime,
\]
So by the chain rule, we have
\[
\p\psi_{0,1} = \begin{bmatrix}-1 & -1 \\ 0 & 1\end{bmatrix}\p\psi_{0,2} + \begin{bmatrix}1\\0 \end{bmatrix}
\]
on $K$. We may now apply Lemma \ref{lem:stuff} to see that $\p\psi_{0,1}(K)$ is the triangle
\[
\mathrm{Hull}\{(4,0), (\tfrac{16}{3}, 0), (2,2)\}.
\]

A similar computation can be performed for the region bounded by $n_{013},n_{1},n_{01}$; the resulting image under $\p\psi_{0,1}$ is the triangle with vertices $(0, 4), (0, \tfrac{16}{3})$, and $(2, 2)$. 

We have thus shown:
\begin{prop}\label{transport problem}

Define the planar regions:
\begin{align}
\Omega &:= \mathrm{Hull}\{ (0,0), (\tfrac{1}{3}, 0), (1,1), (0, \tfrac{1}{3})\} \label{ugh1} \\
\Theta &:= \mathrm{Hull}\{(\tfrac{4}{3},\tfrac{4}{3}), (4,0), (\tfrac{16}{3},0), (2, 2)\}\cup \mathrm{Hull}\{(\tfrac{4}{3},\tfrac{4}{3}), (0,4), (0,\tfrac{16}{3}), (2, 2)\}.\label{ugh2}
\end{align}
If $\psi$ is a $c$-convex solution to \eqref{cMA}, then $\psi_{0,1}$ is a convex function on $\Omega$ such that:
\[
\p\psi_{0,1}: \Omega\setminus\{y = x \geq 1/2\} \rightarrow \Theta.
\]

\end{prop}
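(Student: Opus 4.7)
My plan is to decompose the kite $\Omega$ into three sub-regions by cutting along the two segments joining $(1/2,1/2)$ to $(1/3,0)$ and $(0,1/3)$, compute the image of each sub-region under $\partial\psi_{0,1}$ via Lemma~\ref{lem:stuff}, and assemble these into $\Theta$. Convexity of $\psi_{0,1}$ on $\Omega$ is not the issue: since $\Omega = q_{0,1}^{-1}(Q) \subseteq q_{0,1}^{-1}(\mathrm{Star}(n_0))$, this follows from \cite[Lemma~3.13]{HJMM22} as recalled above. The central sub-region is $q_{0,1}^{-1}(B \cap \mathrm{Hull}\{n_0, n_{013}, n_{01}, n_{012}\}) = \mathrm{Hull}\{(0,0),(1/3,0),(1/2,1/2),(0,1/3)\}$, which lies inside $T_0$; the remaining two are the triangle $K = \mathrm{Hull}\{(1/3,0),(1,1),(1/2,1/2)\}$ and its mirror image across the diagonal.

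For the central sub-region, Lemma~\ref{lem:stuff} applies directly in the chart $q_{0,1}$, so the image under $\partial\psi_{0,1}$ is the $p_{0,1}^{-1}$-image of $(\partial^c\psi)(n)$ as $n$ ranges over the corresponding subregion of $T_0 \cap Q$. Adapting the subdivision argument from the proof of \cite[Lemma~4.1]{HJMM22}, and using the $S_3$-symmetry of $\psi$ to pin down where the various vertices are sent, identifies this $c$-subgradient image as a subregion of $P$, to which $p_{0,1}^{-1}$ is then applied linearly, yielding the triangle $\mathrm{Hull}\{(4/3,4/3),(4,0),(0,4)\}$. The collapse $(1/2,1/2) \mapsto (2,2)$ encodes $\partial^c\psi(n_{01}) = \{m_{23}\}$.

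For the outer sub-triangle $K$, the underlying region in $B$ lies in $\tau_3$ rather than $T_0$, so Lemma~\ref{lem:stuff} does not apply directly in the chart $q_{0,1}$. I would instead apply it in the chart $q_{0,2}$ to obtain $\partial\psi_{0,2}$ on the corresponding pullback region, and convert back to $\partial\psi_{0,1}$ via the identity
\[
\psi_{0,1}(n') = \psi_{0,2}\bigl(q_{0,2}^{-1}\circ q_{0,1}(n')\bigr) + \langle m_2 - m_1,\, q_{0,1}(n') \rangle,
\]
which follows immediately from the definitions $\psi_{0,j} = (\psi - m_j)\circ q_{0,j}$. A direct computation of $q_{0,2}^{-1}\circ q_{0,1}$ on $K$ produces the linear part $\bigl(\begin{smallmatrix} -1 & 0 \\ -1 & 1 \end{smallmatrix}\bigr)$, and the chain rule converts the image under $\partial\psi_{0,2}$ into $\mathrm{Hull}\{(4,0),(16/3,0),(2,2)\}$. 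The mirror sub-triangle is handled identically using the $n_2 \leftrightarrow n_3$ reflection symmetry of $\psi$. Assembling the three images gives $\Theta$; the slit $\{y=x\geq 1/2\}$ is forced precisely because $\partial\psi_{0,1}$ is multi-valued across the shared boundary of the two outer sub-triangles, jumping between the matching edges of $\Theta$. The main technical care is the chart-transition linear algebra on $K$, which I would sanity-check by evaluating the resulting affine relation at the three vertices of $K$ against the claimed vertices of the image triangle.
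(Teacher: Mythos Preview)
Your approach matches the paper's exactly: the same three-piece decomposition of $\Omega$, direct use of Lemma~\ref{lem:stuff} on the central quadrilateral, and the chart-transition identity relating $\psi_{0,1}$ to $\psi_{0,2}$ on $K$ via the matrix $\bigl(\begin{smallmatrix}-1&0\\-1&1\end{smallmatrix}\bigr)$ and the resulting affine correction. Two cosmetic index slips to fix: Lemma~\ref{lem:stuff} gives $p_{1,0}^{-1}$ (not $p_{0,1}^{-1}$) as the bijection for $\psi_{0,1}$ on $T_0^\circ$, and the region in $B$ underlying $K$ lies in $\tau_2$, not $\tau_3$.
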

The situation is depicted in Figure \ref{fig:domains}.

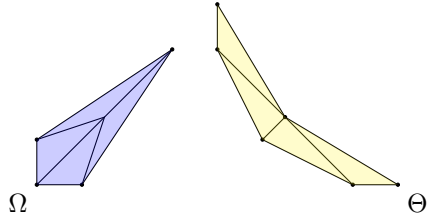
\begin{figure}
\begin{center}
\begin{tikzpicture} [scale=.6]

\coordinate (0) at (0-4,0);
\coordinate (1) at (0-4,1);
\coordinate (2) at (3/2-4,3/2);
\coordinate (3) at (1-4,0);
\coordinate (4) at (3-4,3);

\filldraw (0) circle (1pt);
\filldraw (1) circle (1pt);
\filldraw (3) circle (1pt);
\filldraw (4) circle (1pt);

\node[below left] at (0) {$\Omega$};

\draw (0)--(1)--(4)--(3)--(0);
\draw (0)--(4);
\draw (1)--(2)--(3);

\coordinate (5) at (1,1);
\coordinate (6) at (0,3);
\coordinate (7) at (0,4);
\coordinate (8) at (3/2,3/2);
\coordinate (9) at (4,0);
\coordinate (10) at (3,0);

\node[below right] at (9) {$\Theta$};

\filldraw (5) circle (1pt);
\filldraw (6) circle (1pt);
\filldraw (7) circle (1pt);
\filldraw (8) circle (1pt);
\filldraw (9) circle (1pt);
\filldraw (10) circle (1pt);

\draw (5)--(6)--(7)--(8)--(9)--(10)--(5);
\draw (5)--(8);
\draw (6)--(10);

\fill[blue, fill opacity=0.2] (0)--(1)--(4)--(3);
\fill[yellow, fill opacity=0.2] (5)--(6)--(7)--(8)--(9)--(10);
\end{tikzpicture}
\end{center}
\caption{The transport map $\p\psi_{0,1}$ maps $\Omega\setminus\{y=x \geq 1/2\}$ onto $\Theta$. Note that the domains have been scaled here to have equal area, but in actuality $\mathrm{Area}(\Theta)=16\cdot \mathrm{Area}(\Omega).$  }
\label{fig:domains}
\end{figure}
\section{Symmetries of the Planar Domains and the Convex Potential}

Suppose now that $\Omega$ and $\Theta$ are the domains given in \eqref{ugh1} and \eqref{ugh2}, respectively, and depicted in Figure \ref{fig:domains}. It will be convenient to rescale $\Theta$ by a factor of $1/4$, so that it has equal volume to $\Omega$. Let $\lambda$ be Lebesgue measure on $\R^2$.

For the rest of the paper, excepting Corollary \ref{log type}, we shall study the optimal transport problem from $\Omega$ to $\Theta$, without referring to the tropical Monge-Amp\`ere equation. By Brenier's Theorem \cite{Bre87} there exists an optimal transport map $T$ (with respect to the $L^1$-cost function) sending $\chi_{\Omega}\lambda$ to $\chi_{\Theta}\lambda$, and a convex potential $\vp$, unique up to an additive constant, such that $T = \p \vp$ and:
\[
\mathrm{MA}(\vp) = 1 \text{ on }\Omega,
\]
in the sense of Brenier. 

The regions $\Omega,\Theta$ have a combined $\Z_2\times \Z_2$-symmetry group by (piecewise) affine maps; if we shift both regions by $(-1/2, -1/2)$, the symmetries become (piecewise) linear, as the fixed point (the singularity) is shifted to the origin. This gives us the new regions (which we continue to denote by $\Omega, \Theta$):
\begin{align*}
\Omega &:= \mathrm{Hull}\{ (-\tfrac{1}{2},-\tfrac{1}{2}), (-\tfrac{1}{6}, -\tfrac{1}{2}), (\tfrac{1}{2},\tfrac{1}{2}), (-\tfrac{1}{2}, -\tfrac{1}{6})\}\\
\Theta &:= \mathrm{Hull}\{(-\tfrac{1}{6},-\tfrac{1}{6}), (\tfrac{1}{2},-\tfrac{1}{2}), (\tfrac{5}{6},-\tfrac{1}{2}), (0,0)\} \cup  \mathrm{Hull}\{(-\tfrac{1}{6},-\tfrac{1}{6}), (-\tfrac{1}{2},\tfrac{1}{2}), (-\tfrac{1}{2},\tfrac{5}{6}), (0,0)\}.
\end{align*}
If we now set:
\[
R := \begin{pmatrix} 0 & 1\\ 1 & 0 \end{pmatrix}\quad\text{ and }\quad A := \begin{cases}\begin{spm} 2 & -3 \\ 1 & -2 \end{spm} &\text{ if } y\geq x\\ \begin{spm} -2 & 1 \\ -3 & 2 \end{spm} &\text{ if } y\leq x \end{cases} 
\]
then one may check that the (piecewise) linear maps determined by $R$ and $A$ both preserve $\Omega$. Note that $R$ exchanges $\Omega\cap\{y > x\}$ and $\Omega\cap\{y < x\}$ while $A$ preserves these halves. Moreover, the symmetries of $\Theta$ are given exactly by $R^T$ ($= R$ of course) and $A^T$. Note that $R^2 = A^2 = \mathrm{Id}_2$.

We now show that these symmetries extend to $\vp$, and use them to determine the behaviour of both $\vp_x + \vp_y$ and $\vp_{xx} + 2\vp_{xy} + \vp_{yy}$ across the diagonal. We define:
\[
\Omega^+ := \{ (x,y)\in \Omega\ |\ y > x\}\quad \Omega^- := \{ (x,y)\in \Omega\ |\ y < x\},
\]
and
\begin{align*}
&\Omega^{++} := \{ (x,y)\in \Omega^+\ |\ 3y > x\}\quad \Omega^{+-} := \{ (x,y)\in \Omega^+\ |\ 3y < x\}\\
&\Omega^{-+} := \{ (x,y)\in \Omega^-\ |\ 3x > y \}\quad \Omega^{--} := \{ (x,y)\in \Omega^-\ |\ 3x < y\}
\end{align*}
Note that $R$ sends $\Omega^{+\pm}$ to $\Omega^{-\pm}$ and $A$ sends $\Omega^{\pm+}$ to $\Omega^{\pm-}$. We also split $\Theta$ into the regions:
\begin{align*}
&\Theta^{+} := \{ (x,y)\in \Theta\ |\ y > x \}\quad\quad \quad \Theta^- := \{ (x,y)\in \Theta\ |\ y < x\}\\
&\Theta^{++} := \{ (x,y)\in \Theta^+\ |\ y > -x\}\quad \Theta^{+-} := \{ (x,y)\in \Theta^+\ |\ y < -x\}\\
&\Theta^{-+} := \{ (x,y)\in \Theta^-\ |\ y > -x \}\quad \Theta^{--} := \{ (x,y)\in \Theta^-\ |\ y < -x\}.
\end{align*}
Again, we have $R(\Theta^{+\pm})=\Theta^{-\pm}$ and $A^T(\Theta^{\pm +}) = \Theta^{\pm -}$. These subdomains are depicted in Figure \ref{fig:subdomains}.

\begin{figure}
\begin{center}
\begin{tikzpicture} [scale=.6]

\coordinate (0) at (0-6,0);
\coordinate (1) at (0-6,1);
\coordinate (2) at (3/2-6,3/2);
\coordinate (3) at (1-6,0);
\coordinate (4) at (3-6,3);

\coordinate (01) at (0-6,1/2);
\coordinate (14) at (3/2-6,2);
\coordinate (34) at (2-6,3/2);
\coordinate (03) at (1/2-6,0);

\filldraw (0) circle (1pt);
\filldraw (1) circle (1pt);
\filldraw (2) circle (1pt);
\filldraw (3) circle (1pt);
\filldraw (4) circle (1pt);

\node[left] at (01) {$\Omega^{+-}$};
\node[above left] at (14) {$\Omega^{++}$};
\node[below right] at (34) {$\Omega^{-+}$};
\node[below] at (03) {$\Omega^{--}$};

\draw (0)--(1)--(4)--(3)--(0);
\draw (0)--(4);
\draw (1)--(2)--(3);

\coordinate (5) at (1,1);
\coordinate (6) at (0,3);
\coordinate (7) at (0,4);
\coordinate (8) at (3/2,3/2);
\coordinate (9) at (4,0);
\coordinate (10) at (3,0);

\coordinate (56) at (1/2,2);
\coordinate (78) at (3/4,11/4);
\coordinate (89) at (11/4,3/4);
\coordinate (510) at (2,1/2);

\node[left] at (56) {$\Theta^{+-}$};
\node[above right] at (78) {$\Theta^{++}$};
\node[above right] at (89) {$\Theta^{-+}$};
\node[below] at (510) {$\Theta^{--}$};

\filldraw (5) circle (1pt);
\filldraw (6) circle (1pt);
\filldraw (7) circle (1pt);
\filldraw (8) circle (1pt);
\filldraw (9) circle (1pt);
\filldraw (10) circle (1pt);

\draw (5)--(6)--(7)--(8)--(9)--(10)--(5);
\draw (5)--(8);
\draw (6)--(10);

\fill[blue, fill opacity=0.2] (0)--(1)--(4)--(3);
\fill[yellow, fill opacity=0.2] (5)--(6)--(7)--(8)--(9)--(10);
\end{tikzpicture}
\end{center}
\caption{Each domain admits four subdomains of equal area obtained by drawing edges between vertices and the barycenter.  It can be shown that the optimal map preserves this decomposition.}
\label{fig:subdomains}
\end{figure}
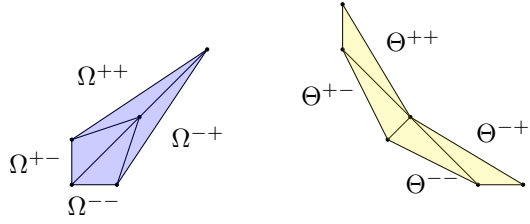
\begin{prop}
\label{prop: convexity and smoothness}

The potential $\vp$ is smooth on $\Omega \setminus \{ y = x \geq  0\}$. Moreover:
\[
\vp = \vp\circ R = \vp \circ A \text{ on }\Omega\setminus \{y = x\},
\] and $\p \vp (\Omega^{\pm\pm})=\Theta^{\pm \pm}.$ In addition, it is strictly convex on this domain, in the sense that $D^2(\vp)>0$ at all points in the interior.
\end{prop}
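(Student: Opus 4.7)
The plan is to exploit Brenier's uniqueness of the convex potential to propagate the ambient symmetries onto $\vp$, and then apply Caffarelli's regularity theory to each half of the problem. First I would establish $\vp = \vp\circ R$: since $R$ is a linear, volume-preserving involution with $R(\Omega)=\Omega$ and $R(\Theta)=\Theta$, the composition $\vp\circ R$ is convex with gradient $R\cdot(\nabla\vp)\circ R$, which pushes $\chi_\Omega\lambda$ forward to $\chi_\Theta\lambda$; Brenier uniqueness forces $\vp\circ R = \vp + c$, and applying $R$ twice gives $c=0$. To show $\p\vp(\Omega^+) = \Theta^+$, I would pick $p = (x,y)\in\Omega^+$, write $\nabla\vp(p) = (a,b)$, and feed the pair $(p,Rp)$ into the monotonicity inequality for $\nabla\vp$:
\[
0 \le \langle \nabla\vp(p) - R\nabla\vp(p),\, p - Rp \rangle = 2(a-b)(x-y).
\]
Since $x<y$ in $\Omega^+$, this forces $a\le b$, placing $\nabla\vp(p)\in\overline{\Theta^+}$; measure preservation upgrades the inclusion to an a.e.\ equality.

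Since $\nabla\vp|_{\Omega^+}$ now pushes $\chi_{\Omega^+}\lambda$ to $\chi_{\Theta^+}\lambda$, the restriction $\vp|_{\Omega^+}$ is the Brenier potential for the convex-to-convex sub-problem $\Omega^+\to\Theta^+$. The matrix $A$ restricts on $\Omega^+$ to a linear involution preserving $\Omega^+$ and $A^T$ preserves $\Theta^+$, so the same Brenier-uniqueness argument yields $\vp\circ A = \vp$ on $\Omega^+$; the $R$-symmetry then transfers this to $\Omega^-$. A parallel monotonicity calculation with the pair $(p, Ap)$ for $p\in\Omega^{++}$ shows $\nabla\vp(\Omega^{++})\subset\overline{\Theta^{++}}$, and analogous inequalities cover the three remaining subregions, yielding $\p\vp(\Omega^{\pm\pm}) = \Theta^{\pm\pm}$ after measure preservation.

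For regularity, Caffarelli's theorem applied to each sub-problem $\Omega^\pm\to\Theta^\pm$ (both convex polygons with constant density) gives $\vp\in C^\infty$ and $D^2\vp>0$ on the open interiors of $\Omega^\pm$. The hard part will be extending smoothness across the segment $\{y=x,\,-1/2<x<0\}$, which lies in $\Omega\setminus\{y=x\ge 0\}$. The key observation is that for such a diagonal point $(t,t)$, the image $\nabla\vp(t,t)=(a(t),a(t))$ satisfies $-1/6<a(t)<0$ and therefore lies in the topological interior of $\Theta$: the two halves $\Theta^\pm$ abut along the segment $\{(a,a):-1/6\le a\le 0\}$, providing a full $2$D neighborhood of $(a(t),a(t))$ inside $\Theta$. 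A localized application of Caffarelli's theorem --- restricting to a small ball around $(t,t)$ whose image lies in a convex sub-neighborhood of $\Theta$ --- then yields smoothness across the diagonal. Strict convexity finally follows from $\det D^2\vp=1$ combined with convexity of $\vp$.
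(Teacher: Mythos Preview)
Your strategy—Brenier uniqueness for the symmetries, cyclical monotonicity for the partition $\p\vp(\Omega^{\pm\pm})=\Theta^{\pm\pm}$, and Caffarelli on convex sub-problems for regularity—is exactly the paper's. One cosmetic difference: the paper runs the monotonicity argument on the Legendre dual $\psi=\vp^\vee$, whose target $\Omega$ is convex, so that \cite{Caf92} immediately makes $\psi$ strictly convex and $C^2$ on all of $\Theta$; the conclusions are then transferred back via $\p\vp\circ\p\psi=\mathrm{Id}$. Working directly with $\vp$ as you do is fine for the symmetry and partition statements.

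The one genuine gap is your treatment of smoothness across the lower diagonal $\{y=x,\ x<0\}$. Your ``localized Caffarelli'' is not a black-box result: restricting $\vp$ to a small ball $B$ around $(t,t)$ gives a convex Alexandrov solution of $\det D^2\vp=1$ on $B$, but Caffarelli's interior regularity requires the image $\p\vp(B)$ \emph{itself} to be convex, not merely to sit inside some convex neighborhood in $\Theta$. You have not shown $\p\vp(B)$ is convex—indeed you have not yet shown $\vp$ is even differentiable at $(t,t)$, so writing $\nabla\vp(t,t)=(a(t),a(t))$ already presupposes what you want. The paper fills this gap with one geometric observation you are missing: the union $\Omega^{+-}\cup\Omega^{--}$ is the convex kite $\mathrm{Hull}\{(-\tfrac12,-\tfrac12),(-\tfrac12,-\tfrac16),(0,0),(-\tfrac16,-\tfrac12)\}$, and its image $\Theta^{+-}\cup\Theta^{--}$ is the convex triangle $\mathrm{Hull}\{(-\tfrac16,-\tfrac16),(-\tfrac12,\tfrac12),(\tfrac12,-\tfrac12)\}$. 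Since the segment $\{y=x<0\}$ lies in the interior of the kite, Caffarelli applied to this third convex-to-convex sub-problem yields smoothness and strict convexity across the lower diagonal directly, completing the open cover of $\Omega\setminus\{y=x\ge 0\}$.
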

\begin{proof}

The proposition follows from the discussion in Section 2, but here we give a proof using only optimal transport theory. 

We work with $\psi := \vp^\vee$, the Legendre transform of $\vp$. The map $\p\psi$ is the optimal map sending $\Theta$ to $\Omega$ with $\Omega$ convex; thus, by \cite{Caf92}, $\psi$ is $C^{2}$ and strictly convex in $\Theta$, so $\p\psi$ is a local homeomorphism whose image is $\Omega$, in an a.e. sense. This regularity of $\psi$, along with some standard results of convex analysis \cite{Roc70}, imply $\psi^\vee=\vp$ and therefore $\p \vp \circ \p \psi  =\mathrm{Id}_2$. It will therefore suffice to show $\p\psi(\Theta^{\pm\pm})=\Omega^{\pm\pm},$ and then conclude
\[
\p \vp(\Omega^{\pm\pm})=\p \vp\circ \p \psi(\Theta^{\pm\pm})=\Theta^{\pm\pm}.
\]

The necessary fact is the \emph{cyclical monotonicity} of the transport map $\p \psi.$ That is, for points $v_1,\cdots, v_{n+1}\in \Theta,$ where $v_{n+1}=v_1,$ we have
\[\sum_{k=1}^n\langle v_{k+1},\p\psi(v_{k+1})-\p \psi(v_k)\rangle \geq 0.\] For $n=2,$ this reduces to 
\[\langle v-w,\p \psi( v) - \p\psi( w)\rangle \geq 0\] for any $v,w\in \Theta.$ 

Since the symmetry $R^T$ is linear and $R^T(\Theta)=\Theta$, the function $\psi \circ R^T$ is another convex function on $\Theta.$ Additionally, by the chain rule \[\p(\psi\circ R^T)(\Theta)=R\circ \p \psi\circ R^T(\Theta)=R\circ \p\psi(\Theta)=R(\Omega)=\Omega.\] Therefore by the uniqueness part of Brenier's Theorem, we have $\psi=\psi\circ R$. Now take a point $(x,y)\in \Theta.$ Cyclical monotonicity tells us
\[
\langle (x,y)-R^T(x,y),\p\psi(x,y)-\p(\psi\circ R^T)(R^T(x,y))\rangle \geq 0.
\]
Then by the chain rule we have
\[
\langle (x-y,y-x),(I-R)\p\psi(x,y) \rangle\geq 0 .
\]
Then the above inequality tells us
\[
(x-y)(\psi_x-\psi_y)+(y-x)(-\psi_x+\psi_y)=2(x-y)(\psi_x-\psi_y)\geq 0.
\]
Since $x<y$ on $\Theta^+$ and $y<x$ on $\Theta^-$, we find that $\psi_x\leq \psi_y$ on $\Theta^+$ and $\psi_y\leq \psi_x$ on $\Theta^{+}.$ This fact, coupled with the fact that $\p \psi$ is an open map preserving Lebesgue measure, implies $\p\psi(\Theta^{\pm})=\Omega^{\pm}$. In fact, the uniqueness part of Brenier's Theorem implies that $\p\psi$ is also the optimal map in the transport problems between $\Theta^{\pm}$ and $\Omega^\pm.$ 

We now demonstrate the further decomposition $\p\psi(\Theta^{\pm\pm})=\Omega^{\pm\pm}.$ We restrict our focus to $\Theta^-;$ the other half follows similarly. Since $A^T$ is linear on $\Theta^-$ and $\partial \psi$ is optimal between $\Theta^-$ and $\Omega^-,$ applying the same argument as before gives $\psi=\psi \circ A^T$. Let $(x,y)\in \Omega^{-}.$ Then by cyclical monotonicity
\[
\langle (x,y)-A^T(x,y),\p\psi(x,y)-\p (\psi\circ A^T)(A^T(x,y))\rangle \geq 0.
\]
Simplifying gives
\[
\langle (3x+3y,-x-y), (I-A)\p \psi(x,y)\rangle \geq 0 .
\] 
Thus, 
\[
(3x+3y)(3\psi_x-\psi_y)+(-x-y)(3\psi_x-\psi_y)=2(x+y)(3\psi_x-\psi_y)\geq 0.
\]
Since $x<-y$  on $\Theta^{-+}$ and $y<-x$ on $\Theta^{--}$, we see that $3\psi_x\leq \psi_y$ on $\Theta^{-+}$ and $\psi_y\leq 3\psi_y $ on $\Theta^{--}$. As before we find $\p \psi(\Theta^{-\pm})=\Omega^{-\pm}$ and $\p\psi$ is optimal between $\Theta^{-\pm}$ and $\Omega^{-\pm}$. Analogous results hold for the other half.

We have succeeded in demonstrating $\p\vp(\Omega^{\pm\pm})=\Theta^{\pm\pm}$ and now turn our focus towards establishing symmetry and regularity of $\vp$ on $\Omega\setminus \{y=x\geq 0\}.$ Since $\psi$ is invariant under the involutive transformations $R^T$ and $A^T$ we find that $\vp$ is invariant under the  transformations $R$ and $A$. Since $\Theta^+$ and $\Omega^+$ are convex regions and $\p\vp(\Omega^{+})=\Theta^+$, by \cite{Caf92} $\vp$ is also strictly convex and smooth on $\Omega^+$; similar statements hold for $\vp$ on $\Omega^{-}$. Since $\p\vp(\Omega^{+-}\cup\Omega^{--})=\Theta^{+-}\cup \Theta^{--}$ and these regions are convex, \cite{Caf92} again gives smoothness and strict convexity of $\vp$ on $\Omega^{+-}\cup\Omega^{--}$. Thus we have demonstrated that $\vp$ is smooth and strictly convex on an open cover of $\Omega\setminus\{y=x\geq 0\}$ and therefore conclude.
\end{proof}

\begin{cor}\label{f and rho}
Define:
\[
 v := \vp_x + \vp_y \quad \text{ and }\quad \rho := v_x + v_y\quad  \text{ on }\ \ \Omega\setminus \{y = x \geq 0\}.
\]
Then $v = v\circ R = -v\circ A$ and $\rho = \rho\circ R = \rho\circ A$ on $\Omega\setminus \{y = x\}$, and  hence $v$ and $\rho$ admit continuous extensions to $\Omega$ and $\Omega^*$, respectively, which we still denote by $v$ and $\rho$. Moreover, $\rho$ is strictly positive on $\Omega^*$.
\end{cor}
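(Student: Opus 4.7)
The plan is to prove the corollary in three stages: first verify the claimed symmetries via chain rule, then establish the continuous extensions using a rotated-coordinate reduction plus boundary regularity of optimal transport, and finally prove strict positivity of $\rho$ via the Monge--Amp\`ere identity.

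\emph{Symmetries.} I would apply the chain rule to the identities for $\vp$ in Proposition \ref{prop: convexity and smoothness}. Differentiating $\vp(y,x) = \vp(x,y)$ once gives $\vp_x(y,x) = \vp_y(x,y)$ and its mirror; summing yields $v = v\circ R$, and since $R$ fixes the direction $(1,1)$, a further differentiation in that direction gives $\rho = \rho\circ R$. On $\Omega^+$, where $A = \begin{spm} 2 & -3 \\ 1 & -2 \end{spm}$, the chain rule applied to $\vp = \vp\circ A$ produces $\vp_x = 2(\vp_x\circ A) + (\vp_y\circ A)$ and $\vp_y = -3(\vp_x\circ A) - 2(\vp_y\circ A)$; since each row of $A$ sums to $-1$, adding these yields $v = -v\circ A$. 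Because $A$ sends $(1,1)$ to $(-1,-1)$, differentiating $v=-v\circ A$ in direction $(1,1)$ produces two compensating sign changes, giving $\rho = \rho\circ A$. The analysis on $\Omega^-$ is identical with the alternate matrix.

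\emph{Continuous extensions.} I would pass to rotated coordinates $u = x+y$, $w = y-x$ and introduce $\Phi(u,w) := \vp\bigl(\tfrac{u-w}{2}, \tfrac{u+w}{2}\bigr)$. Then $\Phi$ is convex, even in $w$ by $R$-symmetry, and smooth off $\{w = 0,\ u\geq 0\}$; a direct computation gives $v = 2\Phi_u$ and $\rho = 4\Phi_{uu}$, so both are even in $w$ where defined. Evenness reduces the extension problem to producing one-sided limits at $w = 0^+$, after which $R$-symmetry automatically supplies the matching limit from below. At interior points of the slit (i.e., $u > 0$, $u\neq 1$), these one-sided limits exist by Caffarelli's boundary regularity \cite{Caf92} applied to the optimal transport between the convex polygons $\Omega^+$ and $\Theta^+$. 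Continuity of $v$ at the origin follows from the fact that $\partial\vp(0,0)$ is a single point, namely $(0,0)$: this is inherited from the transport structure of Section \ref{section: Setup} (in particular $\partial^c\psi(n_{01}) = \{m_{23}\}$ becomes $\partial\vp(0,0) = \{(0,0)\}$ under the coordinate identifications), and single-valuedness together with upper semi-continuity of the subdifferential yields $v(x,y)\to 0$ as $(x,y)\to 0$.

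\emph{Strict positivity of $\rho$.} On $\Omega \setminus \{y = x \geq 0\}$ strict convexity of $\vp$ from Proposition \ref{prop: convexity and smoothness} directly gives $\rho = (1,1) D^2\vp (1,1)^T > 0$. On the slit interior (minus the origin), I would combine Brenier's equation $\det D^2\vp = 1$ with the algebraic identity
\[
\rho\cdot\sigma \;=\; 4\det D^2\vp + (\vp_{xx} - \vp_{yy})^2,
\]
where $\sigma := \vp_{xx} - 2\vp_{xy} + \vp_{yy} = 4\Phi_{ww}$, so that $\rho\geq 4/\sigma$. The boundary regularity of the previous step keeps $\sigma$ bounded on compact subsets of the slit away from the origin, producing a strictly positive lower bound for $\rho$ there. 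The step I expect to be the main obstacle is the clean application of boundary regularity at the corner $(1/2, 1/2)$ of $\Omega$, where $\Omega^{\pm}$ and $\Theta^{\pm}$ all have polygonal vertices rather than smooth boundaries; a localized variant of Caffarelli's arguments, combined with the symmetries just established, should suffice to extend continuity and the bound on $\sigma$ across this point.
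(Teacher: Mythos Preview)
Your chain-rule verification of the symmetries is correct and essentially matches the paper's argument (the paper phrases it as summing the columns of $A^T$ and $R^T$, which is the same computation).

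Where you diverge is in how you use those symmetries. You derive $v=-v\circ A$ and $\rho=\rho\circ A$ but then set them aside and appeal to boundary regularity of the transport $\Omega^+\to\Theta^+$ to produce one-sided limits along the slit. The paper's route is much shorter: observe that on $\Omega^+$ the map $A=\begin{spm}2&-3\\1&-2\end{spm}$ sends a slit point $(t,t)$ with $t>0$ to $(-t,-t)$, which lies on the \emph{negative} half-diagonal, squarely inside the region $\Omega\setminus\{y=x\ge 0\}$ where $\vp$ is already known to be smooth. Hence the relations $v=-v\circ A$ and $\rho=\rho\circ A$ by themselves force the limits of $v$ and $\rho$ along the slit to exist and to equal $-v(-t,-t)$ and $\rho(-t,-t)$, with the same answer from either side (the $\Omega^-$ branch of $A$ also sends $(t,t)\mapsto(-t,-t)$). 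No boundary regularity is needed for the extension to $\Omega^*$, and the corner difficulty you flag at $(1/2,1/2)$ simply does not arise.

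The same observation streamlines the positivity argument. Once $\rho=\rho\circ A$ reduces you to the smooth set, you do not need your identity $\rho\sigma = 4+(\vp_{xx}-\vp_{yy})^2$ together with a bound on $\sigma$; it is enough to note (as the paper does) that the Monge--Amp\`ere equation gives $\vp_{xy}>-\sqrt{\vp_{xx}\vp_{yy}}$, so $\rho>(\sqrt{\vp_{xx}}-\sqrt{\vp_{yy}})^2\ge 0$. Your $\rho\ge 4/\sigma$ route would require a $C^2$-type boundary estimate on $\sigma$ up to the polygonal edges and corners of $\Omega^+$, which Caffarelli's results do not supply directly; this is the genuine gap in your proposal, and it is exactly what the $A$-symmetry lets you avoid.

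For the continuity of $v$ at the origin, your argument via single-valuedness of $\partial\vp(0,0)$ and closedness of the subdifferential graph is fine. The paper instead invokes the $C^{1,\alpha}$ regularity of \cite{Caf96} and \cite{SY20}; either approach works.
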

\begin{proof}
Working on $\Omega\setminus \{y = x\}$, we have that:
\[
\p \vp = \p (\vp\circ A) = A^T (\p \vp)\circ A.
\]
We may now sum the columns of $A^T$ to conclude that:
\[
\vp_x + \vp_y = - \vp_x(A) - \vp_y(A),
\]
i.e. $v = -v\circ A$. Similarly, we see that $v = v\circ R$, since the columns of $R^T$ sum to $1$. The exact same process works to show $\rho = \rho\circ R = \rho\circ A$. 

The symmetries $v=-v\circ A$ and $\rho = \rho \circ A$ now imply that $v$ and $\rho$ extend continuously to $\Omega^*.$ In addition, \cite[Theorem C3]{Caf96} and \cite{SY20} dictate that $\vp$ is  $C^{1,\alpha}$ on $\Omega\setminus \{y=x\geq 0\}$ for some $\alpha>0.$ Hence $v$ extends continuously to $\Omega.$

We are left to check $\rho > 0$. By the symmetry $\rho=\rho\circ A,$ we can restrict ourselves to the set where $\vp$ is smooth. The Monge-Amp\`ere equation $\vp_{xx}\vp_{yy}-\vp_{xy}^2=1$ and the fact that the square root function is strictly increasing together imply
\[
\vp_{xy}\geq -\sqrt{\vp_{xx}\vp_{yy}-1}>-\sqrt{\vp_{xx}\vp_{yy}}.
\]
Thus,
\[
\vp_{xx}+2\vp_{xy}+\vp_{yy}>\vp_{xx}-2\sqrt{\vp_{xx}\vp_{yy}}+\vp_{yy}=(\sqrt{\vp_{xx}}-\sqrt{\vp_{yy}})^2\geq 0.
\qedhere\]
\end{proof}

\section{Isothermal Coordinates and the Special K\"ahler Structure}
\label{section: SKS}

We now find isothermal coordinates for the Hessian metric $g := \nabla d\vp$ on $\Omega\setminus\{x = y \geq  0\}$, in terms of the potential. By Corollary \ref{f and rho}, both the coordinates and the conformal factor extend continuously, but not smoothly, across the slit. However, the key point is that both fail to be smooth in exactly the same way -- namely, by the transformation $A$. This will allow us to show in Proposition \ref{conf sym} that the conformal factor actually extends smoothly in the isothermal coordinates.

We can then define a special K\"ahler structure (SKS) on the punctured disk in these isothermal coordinates. The isothermals we choose are moreover seen to be ``special holomorphic" for this SKS, and so by \cite[Proposition 10.2.9]{CH19}, we see that the conformal factor is harmonic. It follows that the SKS is of logarithmic type with order 1; as previously mentioned, this means it is of the same type as the Gross-Wilson metric \cite{GW00, Lof05}

\begin{prop}
\label{prop:coord change}
Define:
\[
u(x, y) := x - y.
\]
Then the coordinate change $f(x,y)=(u(x,y),v(x,y))$ is a homeomorphism on $\Omega$, smooth on $\Omega\setminus \{y=x\geq 0\},$ mapping the subdomains $\Omega^{\pm \pm}$ into the four quadrants of the plane and fixing the origin. Moreover, the $(u,v)$-coordinates are isothermal for $g$ on $\Omega\setminus\{y = x \geq  0\}$; specifically, we have:
\[
g = \rho^{-1} |du^2 + dv^2|\text{ on }\Omega\setminus\{ y = x \geq 0\}.
\]
\end{prop}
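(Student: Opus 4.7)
The plan is to verify each listed property in turn, relying on the structural results of Section \ref{section: Setup} and Section 3, together with a short calculation invoking the Monge-Amp\`ere equation. A direct computation gives
\[
Df = \begin{pmatrix} 1 & -1 \\ \vp_{xx} + \vp_{xy} & \vp_{xy} + \vp_{yy} \end{pmatrix},
\]
so $\det Df = \vp_{xx} + 2\vp_{xy} + \vp_{yy} = \rho$. Since $\vp$ is smooth on $\Omega\setminus\{y=x\geq 0\}$ by Proposition \ref{prop: convexity and smoothness}, and $\rho>0$ there by Corollary \ref{f and rho}, the map $f$ is a smooth local diffeomorphism on this set. Continuity on all of $\Omega$ is immediate from Corollary \ref{f and rho}, and $f(0,0)=(0,0)$ follows because $A$ fixes the origin and the symmetry $v=-v\circ A$ forces $v(0,0)=0$.

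For the subdomain assignments, $u=x-y$ is visibly negative on $\Omega^{+}$ and positive on $\Omega^{-}$. For $v$, Proposition \ref{prop: convexity and smoothness} gives $\p\vp(\Omega^{\pm\pm})=\Theta^{\pm\pm}$, and inspection of the defining inequalities shows $x+y>0$ on $\Theta^{\pm+}$ and $x+y<0$ on $\Theta^{\pm-}$. Since $v=\vp_x+\vp_y$ is the sum of the components of $\p\vp$, the sign of $v$ on $\Omega^{\pm\pm}$ matches that on the corresponding $\Theta^{\pm\pm}$, placing each subdomain in its own open quadrant.

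The heart of the proof is injectivity; once that is established, $f\colon \Omega\to f(\Omega)$ is a continuous bijection from a compact space onto a Hausdorff space, hence a homeomorphism. Because the four quadrants together with the $v$-axis partition the target, it suffices to check injectivity on each $\Omega^{\pm\pm}$ separately and on the slit $\{y=x\}\cap\Omega$. On $\Omega^{\pm}$, the level sets of $u$ are segments in the direction $(1,1)$, and the directional derivative of $v$ along $(1,1)$ is $v_x+v_y=\rho>0$, so $v$ is strictly monotone along each level set of $u$, giving injectivity. On the slit, setting $V(t):=v(t,t)$, we have $V'(t)=\rho(t,t)>0$ for $t\in(-1/2,0)$ where $\vp$ is smooth. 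The key point is that $A(t,t)=(-t,-t)$, so the symmetry $v=-v\circ A$ of Corollary \ref{f and rho} gives $V(t)=-V(-t)$; hence $V$ is odd and automatically strictly increasing on $(0,1/2)$ and continuous through $0$. This step, using the symmetry to cross the non-smooth slit, is the main obstacle, since we have no direct analytic access to the derivative of $v$ along $\{y=x\geq 0\}$.

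Finally, the isothermal identity is an algebraic check. Expanding $du=dx-dy$ and $dv=(\vp_{xx}+\vp_{xy})dx+(\vp_{xy}+\vp_{yy})dy$ and comparing $\rho\cdot g=\rho(\vp_{xx}dx^2+2\vp_{xy}\,dx\,dy+\vp_{yy}dy^2)$ with $du^2+dv^2$ coefficient by coefficient, each of the three identities reduces after a short expansion to the Monge-Amp\`ere equation $\vp_{xx}\vp_{yy}-\vp_{xy}^2=1$. For instance, matching $dx^2$-coefficients requires $\rho\vp_{xx}=1+(\vp_{xx}+\vp_{xy})^2$, which rearranges exactly to $\vp_{xx}\vp_{yy}-\vp_{xy}^2=1$; the $dy^2$ and $dx\,dy$ terms are analogous. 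This completes the verification that $(u,v)$ are isothermal with conformal factor $\rho^{-1}$.
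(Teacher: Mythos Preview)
Your proof is correct and follows essentially the same route as the paper: injectivity via monotonicity of $v$ along the level lines $\{x-y=c\}$ using $v_x+v_y=\rho>0$, extension across the exceptional half-diagonal via the oddness $v=-v\circ A$, the quadrant assignment from $\partial\vp(\Omega^{\pm\pm})=\Theta^{\pm\pm}$, and the isothermal identity by expanding $du^2+dv^2$ and invoking $\vp_{xx}\vp_{yy}-\vp_{xy}^2=1$. The only cosmetic difference is that you compute $Df$ and $\det Df=\rho$ explicitly and phrase the homeomorphism conclusion via compactness, and your sentence ``it suffices to check injectivity on each $\Omega^{\pm\pm}$'' is slightly misstated (you then correctly work on $\Omega^{\pm}$ and the diagonal, which is what is actually needed since $u$ already separates these three pieces).
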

\begin{proof}

By writing down the $g$-Laplacian in $(x,y)$-coordinates, it is easy to check that (many) linear combinations of $x, y, \vp_x,$ and $\vp_y$ define isothermal coordinates for $g$ on the set where it is smooth. Here however, we will prove the proposition by direct computations.

By Corollary \ref{f and rho}, we see that $f$ is continuous on $\Omega$ and smooth on $\Omega\setminus\{y=x\geq 0\}.$ To demonstrate the injectivity of $f$ it will suffice to prove it along lines of the form $x-y=c$. We have:
\[
\tfrac{d}{dx}v(x,x-c)=\vp_{xx}(x,x-c)+2\vp_{xy}(x,x-c)+\vp_{yy}(x,x-c),
\] 
on $(\Omega\setminus \{y=x\geq 0\})\cap\{x-y = c\}$, so $v$ is strictly increasing along these lines. We can extend this to the exceptional line $\{x = y\}$ by the symmetry $v=-v\circ A$ (which additionally implies $v(0,0) = 0$). Thus, $f$ is a continuous injection and therefore a homeomorphism.

To see that $f$ maps the subdomains $\Omega^{\pm\pm}$ to the four quadrants of the plane, we observe first that $u<0$ on $\Omega^+$ and $u>0$ on $\Omega^-.$ Then by Proposition \ref{prop: convexity and smoothness}, along with the definitions of $\Theta^{\pm\pm},$ we see that $v<0$ on $\Omega^{+-}\cup \Omega^{--} $ and $v>0$ on $\Omega^{++}\cup \Omega^{-+}.$

The proof that $(u,v)$-coordinates are isothermal is a computation. We have:
\begin{align*}
du^2 + dv^2 &= (dx-dy)^2+((\vp_{xx}+\vp_{xy})dx + (\vp_{xy}+\vp_{yy})dy)^2\\
&=(1+\vp_{xx}^2+2\vp_{xx}\vp_{xy}+\vp_{xy}^2)dx^2\\
&+(-1+\vp_{xx}\vp_{xy}+\vp_{xx}\vp_{yy}+\vp_{xy}^2+\vp_{xy}\vp_{yy})(dxdy+dydx)\\
&+(1+\vp_{xy}^2+2\vp_{xy}\vp_{yy}+\vp_{yy}^2)dy^2.
\end{align*}
Using the Monge-Amp\`ere equation $\vp_{xx}\vp_{yy}-\vp_{xy}^2=1$ to eliminate the constant terms then gives:
\begin{align*}
du^2 + dv^2 &=\vp_{xx}(\vp_{xx}+2\vp_{xy}+\vp_{yy})dx^2\\
&+\vp_{xy}(\vp_{xx}+2\vp_{xy}+\vp_{yy})(dxdy+dydx)\\
&+\vp_{yy}(\vp_{xx}+2\vp_{xy}+\vp_{yy})dy^2\\
&=(\vp_{xx}+2\vp_{xy}+\vp_{yy})g.\qedhere
\end{align*}
\end{proof}
\begin{prop}\label{conf sym}

Let $\ti{\Omega}=f(\Omega)$ and $\ti{\Omega}^* = \ti{\Omega}\setminus\{(0,0)\}$. Then $\ti{\Omega}$ is invariant under the linear transformations:
\[
G = \begin{pmatrix} -1  & 0\\ 0 & 1 \end{pmatrix} \quad \quad H =  \begin{pmatrix} 1  & 0\\ 0 & -1 \end{pmatrix}.
\]
Moreover, $\rho$ is invariant under these transformations, and hence $g$ admits a smooth extension
\[
g := \rho^{-1} |du^2 + dv^2|
\]
to all of $\ti{\Omega}^*$, which is K\"ahler with respect to the standard $(u,v)$-complex structure.
\end{prop}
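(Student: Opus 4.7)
The plan has four steps: translate the symmetries into $(u,v)$-coordinates, identify the exceptional set in $(u,v)$-coordinates, promote continuity to smoothness using the $H$-symmetry, and then conclude the K\"ahler property from dimension.

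First, I would intertwine $R, A$ on $\Omega$ with $G, H$ on $\ti\Omega$. By definition $u \circ R = -u$, and a direct computation from the two matrix expressions for $A$ gives $u \circ A = u$: on $\{y\ge x\}$, $(2x-3y)-(x-2y)=x-y$, and similarly on $\{y\le x\}$. Combined with $v \circ R = v$ and $v \circ A = -v$ from Corollary \ref{f and rho}, this yields $f \circ R = G \circ f$ and $f \circ A = H \circ f$ on $\Omega \setminus \{y=x\}$, and hence everywhere on $\Omega$ by continuity. Since $R(\Omega)=A(\Omega)=\Omega$ and $f$ is a homeomorphism (Proposition \ref{prop:coord change}), $G(\ti\Omega)=H(\ti\Omega)=\ti\Omega$. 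Conjugating $\rho = \rho \circ R = \rho \circ A$ through $f$ gives $\rho = \rho \circ G = \rho \circ H$ on $\ti\Omega^*$.

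Second, I would locate the slit in $\ti\Omega$. By Proposition \ref{prop:coord change}, $f$ is a smooth diffeomorphism on $\Omega \setminus \{y=x\ge 0\}$. On the slit $u\equiv 0$, and $v(0,0)=0$ follows from $v = -v \circ A$ with $A(0,0)=0$. The slit lies in the closure of both $\Omega^{++}$ and $\Omega^{-+}$, on each of which $v>0$, so $v(x,x)>0$ for $x>0$. Thus $S := f(\{y=x,\ 0\le x\le \tfrac12\})$ is a segment on the non-negative $v$-axis that starts at the origin, and $\rho$ is a priori only continuous on $S \setminus \{0\}$ but smooth on $\ti\Omega \setminus S$.

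Third, and this is the heart of the argument, I would use the $H$-symmetry to upgrade smoothness across $S \setminus \{(0,0)\}$. Fix $(0,v_0)\in S$ with $v_0>0$ and choose a small open disk $D$ around $(0,v_0)$ contained in $\{v>0\}$. Then $H(D)$ is a disk around $(0,-v_0)$ lying in $\{v<0\}$, hence disjoint from $S$, so $\rho$ is smooth on $H(D)$; therefore $\rho \circ H$ is smooth on $D$. The identity $\rho = \rho \circ H$ from Step 1 then forces $\rho \in C^\infty(D)$. Running $(0,v_0)$ through $S \setminus \{(0,0)\}$ yields $\rho \in C^\infty(\ti\Omega^*)$.

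Finally, since $\rho$ is smooth and positive on $\ti\Omega^*$, the metric $g = \rho^{-1}(du^2 + dv^2)$ is a smooth conformal deformation of the Euclidean metric. In the complex coordinate $w = u+iv$, its K\"ahler form is $\omega = \tfrac{i}{2}\rho^{-1}\, dw \wedge d\bar w$, a $2$-form on a real $2$-manifold and thus automatically closed; so $g$ is K\"ahler. The main obstacle is Step 3, and the geometric point that makes it work is that the slit is \emph{one-sided} (on the positive $v$-axis) while $H$ reflects across the $u$-axis: a neighborhood of a slit point is thereby carried into the smooth region. If the slit crossed the negative $v$-axis, the argument would collapse, which is why establishing the sign of $v$ on the slit in Step 2 is essential rather than cosmetic.
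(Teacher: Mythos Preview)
Your proof is correct and follows essentially the same route as the paper: intertwine $R,A$ with $G,H$ via $f\circ R=G\circ f$ and $f\circ A=H\circ f$, identify the image of the slit as a subset of $\{u=0,\,v\ge 0\}$, and then use the $H$-reflection to pull smoothness across the slit from the lower half-plane. Your Steps~2--3 are somewhat more explicit than the paper's (which simply asserts $f(\Omega\setminus\{y=x\ge 0\})=\ti\Omega\setminus\{u=0,v\ge 0\}$ from Proposition~\ref{prop:coord change} and then says ``since $H$ is linear, $\rho$ is smooth on $\ti\Omega^*$''), but the mechanism is identical.
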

\begin{proof}
The invariance of $\ti\Omega$ under the transformations $G$ and $H$ follows from the symmetries $u=-u\circ R=u\circ A$ and $v=v\circ R=-v\circ A$. Indeed, $f\circ R(x,y)=G\circ f(x,y)$ and $f\circ A(x,y)=H\circ f(x,y),$ so 
\begin{align*}
G(\ti\Omega)=G\circ f(\Omega)=f\circ R(\Omega)=f(\Omega)=\ti \Omega&\\ H(\ti\Omega)=H\circ f(\Omega)=f\circ A(\Omega)=f(\Omega)=\ti \Omega&.   
\end{align*}  

Additionally, Proposition \ref{prop:coord change} implies that $f$ is smooth on $\Omega\setminus\{y=x\geq 0\}$ and that $f(\Omega\setminus \{y=x\geq 0\})=\ti\Omega\setminus \{u=0,v\geq0\}$. It follows that $\rho$ is smooth on $\ti \Omega\setminus \{u=0,v\geq 0\}$. The symmetry $\rho=\rho\circ A$ in $(x,y)$-coordinates becomes the symmetry $\rho=\rho\circ H$ in $(u,v)$-coordinates, and since $H$ is linear, we find that $\rho$ is actually smooth on $\ti\Omega^*$; it follows that $g$ admits the desired smooth extension to all of $\ti{\Omega}^*$.

Finally, since $f$ is isothermal for $g$, we have that $g$ is compatible with the standard complex structure on $\ti{\Omega}^*$; it is thus trivially K\"ahler. 
\end{proof}

\begin{prop}

Let $\nabla$ denote the pull-back of the flat connection on $\Omega^*$ by $f$. Then $(g, \nabla)$ is a special K\"ahler structure on $\ti{\Omega}\setminus \{u=0,v\geq 0\}$. 

\end{prop}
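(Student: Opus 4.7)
The plan is to verify the three defining properties of a special Kähler structure directly in the $\nabla$-affine coordinates $(x,y)$. Recall that an SKS consists of a flat torsion-free connection $\nabla$ on a Kähler manifold $(M, g, J, \omega)$ satisfying $\nabla \omega = 0$ and $d_\nabla J = 0$, where $J$ is viewed as a $TM$-valued $1$-form.

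First, since $\nabla$ is by definition the pull-back of the standard flat connection on $\Omega^*$ by the diffeomorphism $f$, on $\tilde\Omega \setminus \{u = 0,\, v \geq 0\}$ the functions $x \circ f^{-1}$ and $y \circ f^{-1}$ are $\nabla$-affine, so $\nabla$ is automatically flat and torsion-free. Abbreviating these functions by $x$ and $y$ again, the frame $\partial_x, \partial_y$ is $\nabla$-parallel.

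Next, I would check $\nabla \omega = 0$ by computing $\omega$ in the $(x,y)$-coordinates. The Kähler form of $g = \rho^{-1}(du^2 + dv^2)$ with respect to the standard complex structure is $\omega = \rho^{-1}\, du \wedge dv$. Using $du = dx - dy$ and $dv = (\vp_{xx}+\vp_{xy})\,dx + (\vp_{xy}+\vp_{yy})\,dy$, a direct expansion yields $du \wedge dv = (\vp_{xx} + 2\vp_{xy} + \vp_{yy}) \, dx \wedge dy = \rho \, dx \wedge dy$. Thus $\omega = dx \wedge dy$, which is clearly $\nabla$-parallel. (Equivalently, $\mathrm{MA}(\vp) = 1$ says that the Riemannian and affine volumes coincide.)

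Finally, for $d_\nabla J = 0$, since $\partial_x, \partial_y$ is a $\nabla$-parallel frame and $\nabla$ is torsion-free, the condition reduces to $\nabla_{\partial_x}(J\partial_y) - \nabla_{\partial_y}(J\partial_x) = 0$, which is equality of mixed partials of the matrix entries of $J$ in this frame. Solving $g(JX, Y) = \omega(X, Y)$ in the $(x,y)$-frame and using the Monge--Ampère equation $\vp_{xx}\vp_{yy} - \vp_{xy}^2 = 1$, one obtains
\[
J \partial_x = -\vp_{xy} \partial_x + \vp_{xx} \partial_y, \qquad J\partial_y = -\vp_{yy} \partial_x + \vp_{xy} \partial_y.
\]
The required identities then become $\partial_x(-\vp_{yy}) = \partial_y(-\vp_{xy})$ and $\partial_x(\vp_{xy}) = \partial_y(\vp_{xx})$, both trivial consequences of the smoothness of $\vp$ on $\Omega\setminus\{y=x\geq 0\}$. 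There is no substantial obstacle here: working in $(x,y)$-coordinates reduces every SKS condition to the Monge--Ampère equation plus equality of mixed partials, with the slit domain present only to guarantee smoothness of the ingredients.
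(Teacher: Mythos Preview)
Your proof is correct and follows essentially the same approach as the paper: both arguments work in the $\nabla$-affine $(x,y)$-coordinates, show $\omega$ reduces to (a constant multiple of) $dx\wedge dy$ using $du\wedge dv=\rho\,dx\wedge dy$, compute $J$ in this frame to obtain exactly the matrix $\begin{pmatrix}-\vp_{xy}&-\vp_{yy}\\ \vp_{xx}&\vp_{xy}\end{pmatrix}$, and then verify the special K\"ahler condition via equality of mixed partials of $\vp$. The only cosmetic difference is that the paper obtains $J$ by pulling back the standard complex structure along $f$, whereas you solve $g(J\cdot,\cdot)=\omega$ directly---but the Monge--Amp\`ere equation makes these computations identical.
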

\begin{proof}

For the duration of the proof we restrict ourselves to $\ti\Omega\setminus\{u=0,v\geq 0\}$. We need to show that $\nabla$ is a symplectic, torsion-free, flat connection satisfying the identity
\begin{equation}\label{special kaehler cond}
(\nabla_XJ)Y=(\nabla_Y J)X,
\end{equation}
where $J$ is the standard complex structure on $\ti{\Omega}$. That $\nabla$ is torsion-free and flat is immediate from the definition. To check that $\nabla$ is symplectic, we calculate the K\"ahler form in $(x,y)$-coordinates:
\[
\omega = g(J\cdot,\cdot) =\frac{1}{\vp_{xx}+2\vp_{xy}+\vp_{yy}}(-dvdu+dudv) =\frac{2\ du\wedge dv}{\vp_{xx}+2\vp_{xy}+\vp_{yy}}.
\]
Then:
\begin{align*}
\omega &=\frac{2(u_x\ dx + u_y \ dy)\wedge(v_x \ dx + v_y \ dy)}{\vp_{xx}+2\vp_{xy}+\vp_{yy}} =\frac{2(u_xv_y-u_yv_x)dx\wedge dy}{\vp_{xx}+2\vp_{xy}+\vp_{yy}} =2 \ dx\wedge dy.
\end{align*} 
Thus, $\omega$ is parallel with respect to $\nabla.$

Finally, we check \eqref{special kaehler cond}. Write $(x,y) = (x^1, x^2)$, and $J=J^j_k\, dx^k \otimes \frac{\p}{\p x^j}$
for the complex structure, using index notation. Then
\begin{align*}
    \nabla J &= \p_\ell J^j_{k}\ dx^\ell\otimes dx^k\otimes \del{x^j},
\end{align*}
so \eqref{special kaehler cond} becomes $\p_\ell J^j_{k} = \p_j J^\ell_{k}$, where we have been writing $\p_\ell f := \frac{\p f}{\p x^\ell}$.
To verify this, we compute the complex structure in $(x,y)$-coordinates:
\begin{align*}
J&=-\del u \otimes dv + \del v \otimes du\\
&=-\frac{1}{u_xv_y-u_yv_x}(v_y \ \del x - v_x \ \del y)\otimes (v_x\ dx + v_y \ dy) \\ &+ \frac{1}{u_xv_y-u_yv_x}(-u_y\  \del x + u_x \ \del y)\otimes (u_x \ dx + u_y \ dy)\\
&= \rho^{-1}[(-v_xv_y - u_xu_y)( \del{x}\otimes dx) + (-v_y^2-u_y^2)(\del{x}\otimes dy)\\
&+ (v_x^2 + u_x^2)(\del{y}\otimes dx) + (v_xv_y + u_xu_y)(\del{y}\otimes dy) ].
\end{align*}
Exactly as before, we may use the Monge-Amp\`ere equation to compute that:
\begin{align*}
&u_x^2 + v_x^2 = 1 + (\vp_{xx} + \vp_{xy})^2 = \vp_{xx}\, \rho\\
&u_y^2 + v_y^2 = 1 + (\vp_{xy} + \vp_{yy})^2 = \vp_{yy}\, \rho\\
&u_xu_y + v_xv_y = \vp_{xy}\,\rho,
\end{align*}
so we conclude that: 
\begin{align*}
J = -\vp_{xy}\ \del{x}\otimes dx + -\vp_{yy} \ \del{x}\otimes dy + \vp_{xx}\ \del{y}\otimes dx + \vp_{xy} \ \del{y}\otimes dy.
\end{align*}
Since $\del{y}(-\vp_{xy}) = \del{x}(-\vp_{yy})$ and $\del{y}\vp_{xx} = \del{x}\vp_{xy}$, we find (\ref{special kaehler cond}) is satisfied.  Thus, $\nabla$ induces a special K\"ahler structure on $\ti\Omega\setminus \{u=0,v\geq 0\}.$
\end{proof}

The crucial observation is that the SKS $(g, \nabla)$ always admits a special holomorphic coordinate; with our specific choice of isothermals, it will actually be the identity function. As shown in \cite[Corollary 10.2.10]{CH19}, the existence of such a coordinate places severe restrictions on the possible singularities of the SKS.

\begin{prop}
The function:
\[
z := x - y + i(\vp_x + \vp_y) = u + i v
\]
is a special holomorphic coordinate for $(g, \nabla)$ on $\ti\Omega\setminus \{u=0,v\geq 0\}$.
\end{prop}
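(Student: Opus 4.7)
The plan is to verify directly the two defining properties of a special holomorphic coordinate on the SKS $(g,\nabla)$: that $z$ is $J$-holomorphic, and that its real and imaginary parts are $\nabla$-affine. Both will follow almost immediately from the two preceding propositions, so I expect the argument to be short.

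For holomorphicity, I would appeal to Proposition \ref{conf sym}, which establishes that $(u,v)$ are isothermal for $g$ and that $g$ is K\"ahler with respect to the standard complex structure on $\ti\Omega$. Since a K\"ahler metric determines its compatible complex structure uniquely (up to sign, which is fixed by an orientation convention), the complex structure $J$ of the SKS coincides with the standard one when expressed in $(u,v)$-coordinates. Consequently, $z = u + iv$ satisfies $dz\circ J = i\, dz$ and is therefore $J$-holomorphic on $\ti\Omega\setminus\{u=0,v\geq 0\}$.

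For flatness, I would use that $\nabla$ is, by definition, the pullback via $f=(u,v)$ of the standard flat connection on $\Omega^*$. In $(u,v)$-coordinates this says $\nabla$ is the standard flat connection, so in particular $\nabla du = \nabla dv = 0$; both $u$ and $v$ are therefore $\nabla$-affine on $\ti\Omega\setminus\{u=0,v\geq 0\}$, and the real and imaginary parts of $z$ are $\nabla$-flat.

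Combining these two observations shows that $z$ is a $J$-holomorphic function whose real and imaginary parts are $\nabla$-affine, which is precisely what it means to be a special holomorphic coordinate for $(g,\nabla)$ in the sense of \cite{CH19, Hay15}. There is no real obstacle here; the statement holds essentially by construction of $\nabla$ and the choice of isothermal coordinates. The only point requiring mild care is matching the two verified properties with the precise convention for ``special holomorphic coordinate'' used in \cite[Proposition 10.2.9]{CH19}, but this is a definitional bookkeeping check rather than a computation.
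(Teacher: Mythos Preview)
Your holomorphicity argument is fine and matches the paper's implicit reasoning. The flatness argument, however, contains a genuine error: you have misidentified $\nabla$.

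The connection $\nabla$ is defined as the pull-back to $\ti\Omega$ of the flat connection on $\Omega^*$; concretely, this means $\nabla$ is the standard flat connection in the $(x,y)$-coordinates, \emph{not} in the $(u,v)$-coordinates. (If $\nabla$ were standard in $(u,v)$, its connection $1$-form $\alpha$ in the proof of Theorem~\ref{thm:harmonic factor} would vanish identically, and the entire special K\"ahler analysis would collapse.) Consequently your claim that $\nabla dv = 0$ is false: $v = \vp_x + \vp_y$ has differential $dv = (\vp_{xx}+\vp_{xy})\,dx + (\vp_{xy}+\vp_{yy})\,dy$, which has non-constant coefficients in $(x,y)$, so $\nabla dv \ne 0$.

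Fortunately, the definition of ``special holomorphic coordinate'' used here (following \cite{CH19}) requires only that the \emph{real part} be $\nabla$-flat. And $u = x - y$ is linear in the $\nabla$-flat coordinates $(x,y)$, so $\nabla du = 0$ holds---just not for the reason you gave. The paper's proof is a one-liner that invokes exactly this: $u$ is linear in $(x,y)$, hence $\nabla$-flat, hence $z$ is special. Your proposal reaches the right destination but via an incorrect identification of $\nabla$, and the extra claim about $v$ being $\nabla$-affine is simply wrong.
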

\begin{proof}
Recall that a holomorphic coordinate in said to be {\it special}, if the real part is flat with respect to $\nabla$. Since we have taken $\nabla$ to be the flat connection in $(x,y)$-coordinates, the real part of $z$ is clearly flat, and hence $z$ is special. 
\end{proof}

We may now prove Theorem \ref{ThrmA}; using Corollary \ref{log type} below, we can arrange that $C=1$ by scaling $z$.

\begin{thm}
\label{thm:harmonic factor}
The conformal factor $\rho^{-1}$ is harmonic on $\ti{\Omega}^*$, and hence:
\[
\rho^{-1} = -C \log \abs{z} + h(z),
\]
for some $C \geq 0$ and some smooth harmonic function $h(z)$ on $\ti{\Omega}$.
\end{thm}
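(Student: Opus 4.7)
The plan is to derive harmonicity of $\rho^{-1}$ from the special K\"ahler structure via \cite{CH19}, and then to apply B\^ocher's theorem on positive harmonic functions to extract the logarithmic form.

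First, I would invoke \cite[Proposition 10.2.9]{CH19}: in any special holomorphic coordinate for an SKS, the conformal factor of the K\"ahler metric is harmonic. The previous proposition shows that $z = u + iv$ is such a coordinate for $(g,\nabla)$ on $\ti{\Omega}\setminus\{u = 0,\, v \geq 0\}$, and Proposition \ref{prop:coord change} gives $g = \rho^{-1}|dz|^2$ there, so this immediately yields harmonicity of $\rho^{-1}$ on the slit domain. To extend harmonicity to all of $\ti{\Omega}^*$, I would use the smooth extension of $\rho^{-1}$ across the slit established in Proposition \ref{conf sym}: since $\rho^{-1}$ is smooth on the connected open set $\ti{\Omega}^*$ and $\Delta \rho^{-1}$ vanishes on an open dense subset, the equation $\Delta \rho^{-1} \equiv 0$ holds on all of $\ti{\Omega}^*$ by continuity.

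Second, Corollary \ref{f and rho} asserts that $\rho$ is strictly positive on $\Omega^*$, hence on $\ti{\Omega}^*$. Therefore $\rho^{-1}$ is a \emph{positive} harmonic function on a punctured planar neighborhood of the origin, and B\^ocher's classical theorem (see, e.g., Axler--Bourdon--Ramey, \emph{Harmonic Function Theory}) states that any such function admits a unique decomposition $\rho^{-1}(z) = -C\log|z| + h(z)$ with $C \geq 0$ and $h$ harmonic on the full disk. Since harmonic functions in the plane are real-analytic, $h$ is automatically smooth; and since $h$ agrees with $\rho^{-1} + C\log|z|$ on a disk about the origin and both are harmonic on $\ti{\Omega}^*$, the function $h$ extends to a smooth harmonic function on all of $\ti{\Omega}$, as required.

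The essential input is the positivity of $\rho^{-1}$: without it, the singular expansion of a general harmonic function on a punctured disk could include higher-order multipolar terms $r^{-k}\cos(k\theta), r^{-k}\sin(k\theta)$ together with a logarithm of arbitrary sign, and the conclusion would fail. Positivity is exactly what B\^ocher's theorem needs to rule these out and force $C \geq 0$. That positivity has already been extracted in Corollary \ref{f and rho} from the Monge--Amp\`ere equation combined with the $\Z_2$-symmetries, so the proof reduces to assembling the two cited theorems; the only mildly subtle point is propagating harmonicity across the slit, which is unproblematic given the smoothness in Proposition \ref{conf sym}.
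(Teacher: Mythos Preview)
Your proposal is correct and follows essentially the same route as the paper. The only difference is that the paper writes out the computation from \cite[Proposition 10.2.9]{CH19} explicitly (connection one-form in $(u,v)$-coordinates, flatness of $u$, $\nabla\omega=0$, and then $\Delta\rho^{-1}=0$), whereas you cite the result as a black box; both proofs then extend across the slit using the smoothness from Proposition~\ref{conf sym} and invoke B\^ocher's theorem (the paper's citation \cite[Theorem~3.9]{ABR01} is exactly this), using the positivity of $\rho^{-1}$ from Corollary~\ref{f and rho} to force $C\ge 0$ and rule out higher-order poles.
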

\begin{proof}
We follow \cite[Proposition 10.2.9]{CH19}. Write:
\[
\alpha = \begin{bmatrix} \alpha^1_1 & \alpha^1_2\\ \alpha_1^2 & \alpha_2^2 \end{bmatrix}
\]
for the connection $1$-form of $\nabla$ in $(u,v)$-coordinates on $\ti\Omega\setminus \{u=0,v\geq 0\}$. The coordinate $u$ is flat with respect to $\nabla$, so we have:
\[
0 = \nabla(d u) = -\alpha_1^1\otimes du - \alpha_2^1\otimes dv;
\]
it follows that $\alpha_1^1 = \alpha_2^1 = 0$. The symplectic form is $\omega := \rho^{-1} i du\wedge dv$; since $\nabla\omega = 0$ also, we get:
\[
0 = \nabla \omega = -\rho^{-1}\alpha_2^2 - \rho^{-2} d\rho \implies \alpha_2^2 = -d\log \rho.
\]
By the proof of \cite[Proposition 2.1]{Hay15}, we have $\alpha_1^2 = *\alpha_2^2$, where $*$ is the Hodge operator with respect to the standard metric $du^2 + dv^2$. Then, the fact that $\nabla$ is flat and $\alpha_1^1 = 0$ implies that:
\[
d^*\alpha_2^2 = -|\alpha_2^2|^2.
\]
We then have that:
\[
\Delta \log \rho = d^*d\log \rho = \abs{\alpha_2^2}^2 = \abs{d \rho}^2.
\]
It follows that:
\[
\Delta \rho^{-1} = \Delta e^{-\log \rho} = -\rho^{-1}\Delta \log \rho + \rho^{-1} \abs{d\log\rho}^2 = 0.
\]
Knowing $\Delta\rho^{-1}=0$ on $\ti\Omega\setminus \{u=0,v\geq 0\}$ in conjunction with knowing $\rho>0$ is smooth on $\ti\Omega^*$ lets us conclude $\Delta\rho^{-1}=0$ on all of $\ti\Omega^*.$

A well-known result for harmonic functions (e.g. \cite[Theorem 3.9]{ABR01}), now shows $\rho^{-1} = C\log |z| + h(z)$.
\end{proof}
\begin{cor}\label{log type}
In the classification of \cite{CH20}, the SKS $(g, \nabla)$ is of logarithmic type with order 1.
\end{cor}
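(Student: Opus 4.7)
The plan is to translate Theorem \ref{thm:harmonic factor} into the language of the \cite{CH20} classification. That theorem already supplies the decomposition $\rho^{-1} = -C\log|z| + h(z)$ with $h$ smooth and harmonic on $\ti{\Omega}$, which is precisely the defining normal form of a logarithmic-type singularity of an SKS. Once this observation is made, two items remain: (i) verify the strict inequality $C > 0$, so that the singularity is not removable, and (ii) identify the ``order'' attached to the logarithm as $1$.

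For (i), I would argue by contradiction. If $C = 0$, then $\rho^{-1}$ extends to a smooth strictly positive function on all of $\ti{\Omega}$, so $g = \rho^{-1}(du^2 + dv^2)$ extends to a smooth Riemannian metric at $z = 0$ and the pulled-back flat connection $\nabla$ must have trivial monodromy around the origin. But $\nabla$ encodes the integral affine structure on $\p\Delta$ at $n_{01}$, whose monodromy is non-trivial: in our local model it is governed by the piecewise-linear involution $A$ of Proposition \ref{prop: convexity and smoothness}, which manifests on the $(u,v)$-side as a non-trivial jump across the slit $\{u = 0,\, v \ge 0\}$ (via the relations $u = u\circ A$, $v = -v\circ A$ from Corollary \ref{f and rho}). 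A connection that extends smoothly across a puncture has trivial monodromy around that puncture, contradicting this. For (ii), I would read off the monodromy of $\nabla$ explicitly by tracing a small loop around $z = 0$: the loop crosses the slit once, picking up the prescribed jump from $A$, and one combines this with the corresponding action on the $(x,y)$-frame. A short calculation in this vein should produce a monodromy conjugate to the standard unipotent $\bigl(\begin{smallmatrix}1 & 1 \\ 0 & 1\end{smallmatrix}\bigr)$ — the $I_1$-Kodaira monodromy predicted by the Gross--Wilson / Ooguri--Vafa analogy — which is exactly the order-$1$ case in \cite{CH20}. After rescaling $z$ to normalise $C = 1$ (as already flagged before Theorem \ref{thm:harmonic factor}), this completes the identification.

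The main obstacle is reconciling conventions: because the isothermal $z = u + iv$ is only continuous across $\{y = x \ge 0\}$ and the connection $\nabla$ is defined as a pullback rather than through a smooth connection form in $(u,v)$-coordinates, the monodromy has to be extracted from the jump prescribed by $A$ rather than by analytic continuation. Beyond this bookkeeping, the corollary is essentially a rewriting of Theorem \ref{thm:harmonic factor} in the vocabulary of \cite{CH20}.
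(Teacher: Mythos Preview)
Your approach to ruling out $C=0$ differs substantively from the paper's. The paper does not argue locally via monodromy at a single singular point: instead it augments the flat atlas on $B_{\reg}$ with conformal charts $r_{i,j}^{-1}$ around \emph{all six} singular points, verifies that this is a smooth atlas on the whole sphere $B\cong S^2$, and then observes that if $C=0$ the connection $\nabla$ (whose $1$-form is built from $\rho$, as in the proof of Theorem~\ref{thm:harmonic factor}) extends to a smooth flat connection on $S^2$. Parallel transport of a nonzero vector would then yield a nowhere-vanishing vector field on $S^2$, contradicting the hairy ball theorem. No monodromy is computed.

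Your local route is reasonable in spirit, but the role you assign to $A$ is not correct. The piecewise-linear map $A$ is a $\mathbb{Z}_2$ symmetry of the transport problem that preserves each half $\Omega^\pm$ separately (swapping $\Omega^{\pm+}\leftrightarrow\Omega^{\pm-}$); it does not encode passage across the slit $\{y=x\ge 0\}$, and since $A^2=\mathrm{Id}_2$ it cannot by itself produce a unipotent monodromy. The relations $u=u\circ A$, $v=-v\circ A$ simply say that in $(u,v)$-coordinates this symmetry becomes the reflection $H$ of Proposition~\ref{conf sym}, not a jump of the affine frame. The genuine affine monodromy around $n_{01}$ comes from the chart transitions of the integral affine structure on $B_{\reg}$---for instance $q_{0,2}^{-1}\circ q_{0,1}$ and its counterpart on the other side, as computed in Section~\ref{section: Setup}---and it is those transitions, not $A$, that assemble into the $I_1$ unipotent. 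So what you describe as ``bookkeeping'' is really the whole computation you would need to carry out; the paper's global argument avoids it entirely. (Neither proof spells out the ``order $1$'' claim in detail; in the paper it is left implicit in the \cite{CH20} classification once $C\neq 0$ is established.)
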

\begin{proof}
The proof consists of ruling out the possibility $C=0$. To do this, we show that in such a case the connection $\nabla,$ a priori a flat connection on $B_{\reg},$ extends smoothly to all of $B$ once we have completed our atlas with conformal charts around each singular point. Then as $B$ has the topology of the sphere, we arrive at a contradiction.

We begin by augmenting our flat atlas with conformal charts around each singular point, as we have done so far around the point $n_{01}$. To do this, we define 
\[r_{i,j}^{-1}(n):=(x-y,(\psi_{i,j})_x+(\psi_{i,j})_y),\] where $q_{i,j}^{-1}(n)=(x,y),$ on the neighborhood of the point $n_{ij}$ enclosed by $n_{i},$ $n_{j},$ $n_{ijk},$ $n_{ij\ell},$ where $\{i,j,k,\ell\}=\{0,1,2,3\}.$ To see that this is homeomorphism to an open set of the plane, we repeat the proof of Proposition \ref{prop:coord change}. Note that around each singular point $n_{ij}$ we have two conformal charts $r_{i,j}^{-1}$ and $r_{j,i}^{-1}$.

We claim that $\{p_{i,j},r_{i,j}\}$ is a smooth atlas for $B.$ Inspecting the aforementioned regions over which $r_{i,j}^{-1}$ is defined, we see that it intersects with the conformal chart $r_{j,i}^{-1}$ and the flat charts \[q_{i,j}^{-1},q_{i,\ell}^{-1},q_{i,k}^{-1}, q_{j,i}^{-1}, q_{j,k}^{-1}, q_{j,\ell}^{-1}.\] First, the transition function from $r_{i,j}^{-1}$ to $r_{j,i}^{-1}$ is a reflection, which follows from the same analysis used to prove Proposition \ref{conf sym}. Then to prove compatibility of $r_{i,j}^{-1}$ with the aforementioned flat charts, it will suffice to check that the transition function from $q_{i,j}^{-1}$ to $r_{i,j}^{-1}$ is smooth, the rest can be had by further composition with already known smooth transition maps. To see this, we repeat the analysis used to prove Proposition \ref{prop:coord change}. Thus, we obtain a smooth atlas on $B.$

As demonstrated in the proof of Proposition \ref{thm:harmonic factor}, if $C=0,$ then the conformal factor $\rho$ becomes smooth as does the connection $\nabla.$ Then $\nabla$ is a flat connection on $S^2,$ enabling us to take a nonzero vector and complete it to a nonvanishing vector field on $S^2$ via parallel transport. This is a contradiction, and thus we conclude $C\not=0.$ 
\end{proof}

We can now show that the $C^2$-norm of $\vp$ blows up as we approach the origin. By combining this with Proposition \ref{transport problem}, we immediately arrive at Theorem \ref{ThrmB}.

\begin{thm}\label{ThrmNotB}
In $(x,y)$-coordinates, the second derivatives of $\vp$ blow up at the origin.
\end{thm}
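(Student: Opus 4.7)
The plan is to read off the blow-up of $D^2\vp$ directly from Theorem \ref{thm:harmonic factor} together with Corollary \ref{log type}. The statement $\rho^{-1} = -C\log|z| + h(z)$ with $C>0$ (from the logarithmic type) and $h$ smooth on all of $\ti\Omega$ tells us that $\rho^{-1}(u,v)\to +\infty$ as $(u,v)\to(0,0)$, hence $\rho \to 0^+$. Since $f=(u,v)$ is a homeomorphism fixing the origin (Proposition \ref{prop:coord change}), this is equivalent to $\rho(x,y)\to 0^+$ as $(x,y)\to(0,0)$ from any direction in $\Omega\setminus\{y=x\ge0\}$, i.e.\
\[
\vp_{xx}+2\vp_{xy}+\vp_{yy}\longrightarrow 0.
\]

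The second step is elementary algebra using the Monge--Amp\`ere equation. Since $\vp$ is strictly convex, $\vp_{xx},\vp_{yy}>0$, and $\vp_{xy}^2=\vp_{xx}\vp_{yy}-1$. If $\vp_{xy}\ge 0$ one would get $\rho \ge 2\sqrt{\vp_{xx}\vp_{yy}}\ge 2$, contradicting $\rho\to 0$; so near the origin $\vp_{xy}<0$. Setting $a=\vp_{xx}$, $b=\vp_{yy}$, $c=-\vp_{xy}>0$ with $ab-c^2=1$, AM--GM gives
\[
\rho \;=\; a+b-2c \;\ge\; 2\sqrt{ab}-2c \;=\; \frac{2(ab-c^2)}{\sqrt{ab}+c} \;=\; \frac{2}{\sqrt{ab}+c}.
\]
Because $c\le\sqrt{ab}$, this yields $\rho \ge 1/\sqrt{ab}$, and so $\sqrt{\vp_{xx}\vp_{yy}}\ge 1/\rho \to \infty$. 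Consequently $\max(\vp_{xx},\vp_{yy})\to\infty$ along every sequence approaching the origin, so the $C^2$-norm of $\vp$ blows up at $(0,0)$.

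The only subtlety is making sure the passage ``$\rho\to 0$ in the plane'' translates back correctly to $(x,y)$-coordinates across the slit $\{y=x\ge 0\}$; this is taken care of by the $A$-symmetry $\rho=\rho\circ A$ from Corollary \ref{f and rho}, which ensures $\rho$ is continuous on $\Omega^*$ and that $\rho\circ f^{-1}$ extends the conformal factor smoothly to $\ti\Omega^*$, as in Proposition \ref{conf sym}. No step here looks genuinely hard: the logarithmic blow-up of $\rho^{-1}$ does all the work, and the Monge--Amp\`ere constraint transfers it into blow-up of a pure second derivative.
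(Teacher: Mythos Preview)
Your proof is correct and follows essentially the same strategy as the paper: use Corollary~\ref{log type} to get $\rho\to 0^+$ at the origin, then combine this with the Monge--Amp\`ere constraint to force blow-up of the Hessian. The only difference is in the elementary algebra: the paper substitutes $\vp_{xy}=\tfrac{1}{2}(\rho-\vp_{xx}-\vp_{yy})$ directly into $\vp_{xx}\vp_{yy}-\vp_{xy}^2=1$ to obtain the clean inequality $\vp_{xx}+\vp_{yy}\ge 2\rho^{-1}$, whereas you use AM--GM and the factorization $2\sqrt{ab}-2c=\tfrac{2(ab-c^2)}{\sqrt{ab}+c}$ to reach $\sqrt{\vp_{xx}\vp_{yy}}\ge \rho^{-1}$. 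Your bound is equivalent (and in fact implies the paper's via AM--GM again), so nothing is lost; the paper's route is marginally more direct, but both are short and valid.
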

\begin{proof}
We shall show that
\[
2\rho^{-1} \leq \vp_{xx} + \vp_{yy};
\]
since $\rho^{-1}$ blows up as $(x,y)\rightarrow (0,0)$, by Corollary \ref{log type}, the result follows.

We have $\vp_{xx}+2\vp_{xy}+\vp_{yy}=\rho,$ so:
\[
\vp_{xy}^2=\frac{\vp_{xx}^2+2\vp_{xx}\vp_{yy}+\vp_{yy}^2}{4} - \frac{\rho}{2}(\vp_{xx}+\vp_{yy})+\frac{\rho^2}{4}.
\] 
Substituting into the Monge-Amp\`ere equation gives:
\begin{align*}
  1 &= \vp_{xx}\vp_{yy}-\vp_{xy}^2\\
  &=-\tfrac{1}{4}\vp_{xx}^2+\tfrac{1}{2}\vp_{xx}\vp_{yy}-\tfrac{1}{4}\vp_{yy}^2+\frac{\rho}{2}(\vp_{xx} + \vp_{yy}) - \frac{\rho^2}{4}\\
  &=-\tfrac{1}{4}(\vp_{xx}-\vp_{yy})^2+\frac{\rho}{2}(\vp_{xx} + \vp_{yy}) - \frac{\rho^2}{4}.  
\end{align*}
Since $\rho > 0$, we conclude. As a remark, a further analysis of the above equation actually shows that each of $\vp_{xx}, \vp_{yy},$ and $-\vp_{xy}$ are bounded below by $(1-\varepsilon)\rho^{-1}$ as $(x,y)\rightarrow 0$, where $\varepsilon > 0$ is arbitrary.
%
\end{proof}

\nocite{*}

\end{document}